\theoremstyle{plain}
\newtheorem{theorem}{Theorem}[section]
\newtheorem*{theorem*}{Theorem}
\newtheorem{thmx}{Theorem}
\newtheorem{cor}[theorem]{Corollary}
\newtheorem{lem}[theorem]{Lemma}
\newtheorem{prop}[theorem]{Proposition}
\theoremstyle{definition}
\newtheorem{ex}[theorem]{Example}
\newtheorem{rem}[theorem]{Remark}
\theoremstyle{remark}
\newcommand\blfootnote[1]{%
  \begingroup
  \renewcommand\thefootnote{}\footnote{#1}%
  \addtocounter{footnote}{-1}%
  \endgroup
}
\newcommand{\RNum}[1]{\uppercase\expandafter{\romannumeral #1\relax}}
\providecommand*{\twoheadrightarrowfill@}{%
  \arrowfill@\relbar\relbar\twoheadrightarrow
}
\providecommand*{\twoheadleftarrowfill@}{%
  \arrowfill@\twoheadleftarrow\relbar\relbar
}
\providecommand*{\xtwoheadrightarrow}[2][]{%
  \ext@arrow 0579\twoheadrightarrowfill@{#1}{#2}%
}
\providecommand*{\xtwoheadleftarrow}[2][]{%
  \ext@arrow 5097\twoheadleftarrowfill@{#1}{#2}%
}
\newcommand\setItemnumber[1]{\setcounter{enum\romannumeral\@enumdepth}{\numexpr#1-1\relax}}
\newcommand\norm[1]{\left\lVert#1\right\rVert}
\newcommand\KK[0]{K\! K}
\DeclareMathOperator\GL{GL}
\DeclareMathOperator\End{End}
\DeclareMathOperator\Ind{Ind}
\DeclareMathOperator\ev{ev}
\DeclareMathOperator\Diff{Diff}
\DeclareMathOperator\Ex{Ex}
\DeclareMathOperator\blup{blup}
\DeclareMathOperator\dom{Dom}
\DeclareMathOperator\codim{Codim}
\newcommand{\R}{\mathbb{R}}
\newcommand{\N}{\mathbb{N}}
\newcommand{\C}{\mathbb{C}}
\newcommand{\Z}{\mathbb{Z}}
\newcommand{\cD}{\mathcal{D}}
\newcommand{\cF}{\mathcal{F}}
\newcommand{\cH}{\mathcal{H}}
\newcommand{\cJ}{\mathcal{J}}
\newcommand{\cK}{\mathcal{K}}
\newcommand{\cT}{\mathcal{T}}
\newcommand{\cX}{\mathcal{X}}
\newcommand{\gr}{\mathfrak{gr}}
\newcommand{\Gr}{\mathcal{G}\mathrm{r}}
\newcommand{\aF}{\mathfrak{a}\cF}
\newcommand{\taD}{\mathfrak{t}\aF}
\begin{document}
\title{On the index of	maximally hypoelliptic differential operators}
\author{Omar Mohsen}
\date{}
\maketitle

% 19K35 Kasparov theory ($KK$-theory)
% 19K56 Index theory
% 22A22 Topological groupoids (including differentiable and Lie groupoids)
% 22A25 Representations of general topological groups and semigroups
% 22E25 Nilpotent and solvable Lie groups
% 46L87 Noncommutative differential geometry
% 46L45 Decomposition theory for $C^*$-algebras
% 46L80 $K$-theory and operator algebras (including cyclic theory)
% 46L87 Noncommutative differential geometry
% 47G30 Pseudodifferential operators	
% 53C12 Foliations (differential geometric aspects)
% 53C29 Issues of holonomy
% 58J22 Exotic index theories
% 58J40 Pseudodifferential and Fourier integral operators on manifolds
% 53R30 Foliations; geometric theory
% 58B34 Noncommutative geometry (�  la Connes)
% 58H05 Pseudogroups and differentiable groupoids 
% 93B18 Linearizations
\begin{abstract} 
	We give an index formula for the class of all $*$-maximally hypoelliptic differential operators on any closed manifold with vector bundle coefficients, generalising previous index formulas by Atiyah-Singer and van Erp. Using this formula, we give new explicit index computations for Hörmander's sum of squares operators of arbitrary rank.\blfootnote{AMS subject classification: 19K56, 22A22, 46L80,~ Secondary: 22E25, 53R30. Keywords: index theory, maximally hypoelliptic differential operators, singular foliations, $C^*$-algebras, Rockland condition.}
	\end{abstract}
%Our formula is stated and proved more generally for longitudinally maximally hypoelliptic differential operators on foliated manifolds.
\setcounter{tocdepth}{2} %doesn't display subsections in TOC 
\tableofcontents
	\section*{Introduction}
Let $E,F\to M$ be vector bundles on a closed manifold $M$, $D:\Gamma(E)\to  \Gamma(F)$ a differential operator. Under suitable conditions on $D$, one can insure that $\ker(D)$ and $\ker(D^*)$ are finite dimensional, where $D^*$ is the formal adjoint. One then defines the analytic index by \begin{equation}\label{eqn:analytic index cinf}
 \Ind_a(D):=\dim(\ker(D))-\dim(\ker(D^*)).
\end{equation}
The index problem asks for a topological formula for the analytic index. For elliptic differential operators, such a formula was found by Atiyah and Singer \cite{AtiyahSingerI}. Nevertheless there is a very large class of non-elliptic operators for which the analytic index is finite.
 
 For some non-elliptic operators called Toeplitz operators which are defined on the boundary of a complex domain, a topological formula for the analytic index was found by Boutet de Monvel \cite{BoutetIndex}. Based on work by Epstein and Melrose \cite{MelroseEps2}, the formula of Boutet de Monvel was generalised to arbitrary contact manifolds by van Erp for operators without vector bundle coefficients \cite{Erik1,Erik2}. Later van Erp and Baum generalised van Erp's formula to allow vector bundle coefficients \cite{ErpBaum} (always on contact manifolds). Their formula allows them to compute the index of some Hörmander's sum of squares  \cite{Hormander:SoS} of rank $2$. 

 In this article, we consider the class of $*$-maximally hypoelliptic differential operator. A $*$-maximally hypoelliptic differential operator is a differential operator $D$ on a manifold $M$ (not necessarily contact) such that one can find some Sobolev type Hilbert spaces $H_1$ and $H_2$ such that $D$ admits a bounded Fredholm extension $D:H_1\to H_2$, see Theorem \ref{thm:max hypo} and Corollary \ref{thm:max hypo_cor} for a precise definition, see also the AMS notice by Street \cite{AMSHypoSurvey} for an expository introduction to the topic. In this case, one can show that the analytic index is finite and it coincides with the Fredholm index of $D:H_1\to H_2$, see Section \ref{sec:setting}. Elliptic differential operators are $*$-maximally hypoelliptic because one can take $H_1$ and $H_2$ the classical Sobolev spaces. Hörmander's sum of squares are also $*$-maximally hypoelliptic see \cite{RotschildStein,FollandStein,HelfferNourrigatBook}. The operators considered by Boutet de Monvel and more generally by van Erp and Baum are also $*$-maximally hypoelliptic, see \cite{BeaGre}. We refer the reader to Example \ref{ex:examplehjisdqf}, for concrete examples of $*$-maximally hypoelliptic differential operators on $S^1\times S^1$ which aren't elliptic.

Atiyah-Singer topological formula is based on the classical principal symbol. In \cite{MohsenMaxHypo}, we defined a principal symbol for $*$-maximally hypoelliptic differential operators. Theorem \ref{thm:max hypo} which is the main theorem of \cite{MohsenMaxHypo}, previously conjectured by Helffer and Nourrigat \cite{HelfferNourrigatCRAcSci} and proven in some cases by them in \cite{HelfferNourrigatBook}, gives an equivalence between $*$-maximally hypoellipticity and invertibility of our principal symbol

Theorem \ref{main thm}, the main theorem of this article is a topological index formula using our symbol from \cite{MohsenMaxHypo} for any $*$-maximally hypoelliptic differential operators on any closed manifold with vector bundle coefficients. This generalises the previously mentioned index theorems and allows us to give new computations of the analytic index. We show computability of our theorem by giving new explicit examples of index computations of Hörmander's sum of squares operators of arbitrary rank on $S^1\times S^1$ and more generally of some $*$-maximally hypoelliptic operators on any manifold $M$ fibered over $S^1$. The computation is a very straightforward application of our index formula.

 Like the approaches of van Erp \cite{Erik1,Erik2} and van Erp and Baum \cite{ErpBaum}, our approach is based on Connes's proof of the Atiyah-Singer index theorem \cite{ConnesBook}. The main novelty in this article is the use of the $C^*$-algebra of singular foliations introduced by Androulidakis and Skandalis \cite{AS1} and their blowups introduced by the author \cite{MohsenBlowup}. In the proof of our index formula, we need a localized version of the Connes-Thom isomorphism \cite{ConnesThom} which is proved in Section \ref{sec:variant}.
\paragraph{Structure of the paper.} \begin{itemize}
\item In Section \ref{sec:setting}, we give a general overview of the article. We briefly recall the results of \cite{MohsenMaxHypo}. We define maximal hypoelliptic operators. We then state the main theorem of \cite{MohsenMaxHypo}, which gives a simple criteria equivalent to maximal hypoellipticity. We then state our index formula. In the end we give examples of index computations of Hörmander's sum of squares operators on $S^1\times S^1$.
\item In \cite{ConnesThom}, Connes proved the Connes-Thom isomorphism, which implies that $K_*(C^*G)\simeq K^*(\mathfrak{g}^*)$, where $G$ is any simply connected nilpotent Lie group. In Section \ref{sec:variant}, we prove a localized version of Connes's isomorphism, where we show that $K_*(I_Z)\simeq K^*(Z)$, where $Z\subset \mathfrak{g}^*$ is any locally closed subset which is invariant under the co-adjoint action and dilations, and $I_Z$ is the corresponding subquotient of $C^*G$.
\item In Section \ref{sec:Examp}, we use our index formula to compute the index of some $*$-maximally hypoelliptic differential operators on any manifold $M$ fibered over $S^1$.
\item In Section \ref{sec:index theorem}, we prove the index formula.\end{itemize}
\section*{Acknowledgments}	
Part of this work was done while the author was a postdoc in Muenster university and was funded by the Deutsche Forschungsgemeinschaft (DFG, German Research Foundation) - Project-ID 427320536 - SFB 1442, as well as Germany's Excellence Strategy EXC 2044 390685587, Mathematics M\"{u}nster: Dynamics-Geometry-Structure. 

The author thanks P. Pansu for interesting discussions which ultimately lead to the examples in the article. The author also thanks G. Skandalis and S. Vassout for their valuable remarks.
\section{Maximal hypoellipticity and index formula}\label{sec:setting}
\paragraph{Setting.} Let $M$ be a closed manifold, $N\in \N$ called the depth and $$0\subseteq \cF^1\subseteq \cdots\subseteq \cF^N=\cX(M)$$ finitely generated $C^\infty(M)$-modules of vector fields which satisfy the condition \begin{equation}\label{eqn:BracketF}
 [\cF^i,\cF^j]\subseteq \cF^{i+j},\quad  i+j\leq N
\end{equation} For example if $X_1,\cdots,X_r\in \cX(M)$ satisfy Hörmander's condition of rank $N$, then one can define $\cF^i$ inductively by \begin{align*}
\cF^1=\langle X_1,\cdots,X_r\rangle,\quad \cF^k=\cF^{k-1}+[\cF^{1},\cF^{k-1}].
\end{align*} 
In general, we don't suppose that the modules $\cF^i$ are generated by the iterated Lie brackets of $\cF^1$.

 Let $x\in M$. We define the Lie algebra $$\gr(\cF)_x=\oplus_{i=1}^N\frac{\cF^i}{\cF^{i-1}+I_x\cF^i},$$ where $I_x\subseteq C^\infty(M,\R)$ is the ideal of smooth functions vanishing at $x$. The Lie algebra structure on $\gr(\cF)_x$ comes from the Lie bracket of vector fields by the formula \begin{align*}[\cdot,\cdot]:\frac{\cF^i}{\cF^{i-1}+I_x\cF^i}\times \frac{\cF^j}{\cF^{j-1}+I_x\cF^j}&\to \frac{\cF^{i+j}}{\cF^{i+j-1}+I_x\cF^{i+j}}\\
   [[X]_x,[Y]_x]&\mapsto\big[[X,Y]_x\big],\quad X\in \cF^i,Y\in \cF^j.
\end{align*} The Lie algebra $\gr(\cF)_x$ is a graded nilpotent Lie algebra. We remark that the function $x\mapsto \dim(\gr(\cF)_x)$ isn't locally constant in general. We denote by $\Gr(\cF)_x$ the space $\gr(\cF)_x$ seen as a Lie group by Baker–Campbell–Hausdorff formula. More generally throughout the article if $\mathfrak{g}$ is a nilpotent Lie algebra, then the simply connected Lie group integrating $\mathfrak{g}$ denotes the space $\mathfrak{g}$ with the product by Baker–Campbell–Hausdorff formula.

 \paragraph{Principal symbol.} 
 
 Let $E,F\to M$ be vector bundles, $D:\Gamma(E)\to \Gamma(F)$ a differential operator on $M$. We say that $D$ is of order $\leq k$ if $D$ can be written as sum of monomials of the form \begin{equation}\label{eqn:diff k}
 \nabla_{X_1}\cdots \nabla_{X_s}\phi,
\end{equation} where $\phi\in \Gamma(\End(E,F))$, $\nabla$ a connection on $F$, $X_i\in \cF^{a_i}$ and $\sum_{i=1}^sa_i\leq k$. We denote by $\Diff^k_{\cF}(M,E,F)$ the space of differential operators of order $\leq k$.

 Let $\pi:\Gr(\cF)_x\to U(H)$ be an irreducible unitary representation. If $D\in \Diff^k_{\cF}(M,E,F)$ is written as sum of monomials of the form $\nabla_{X_1}\cdots \nabla_{X_s}\phi $ as in \eqref{eqn:diff k}, then one defines \begin{equation}\label{eqn:princip symb dfn}
\textstyle \sigma^k(D,x,\pi)=  \sum^\prime \big( \pi([X_1]_x)\cdots \pi([X_s]_x)\big)\otimes \phi_x:C^\infty(H)\otimes E_x\subseteq H\otimes E_x\to  H\otimes F_x,
\end{equation} where \begin{itemize}
 \item $\Sigma'$ means one sums only over monomials such that $\sum_{i=1}^s a_i=k$.
 \item $[X_i]_x\in \frac{\cF^{a_i}}{\cF^{a_i-1}+I_x\cF^{a_i}}\subseteq \gr(\cF)_x$ is the class of $X_i$.
 \item $\pi([X_i]_x)$ is the differential of $\pi$ applied to $[X_i]_x$.
 \item $C^\infty(H)$ denotes the subspace of smooth vectors.
\end{itemize}  
The sum in \eqref{eqn:princip symb dfn} can depend on the choice of the presentation of $D$ as a sum of monomials, see \cite[Examples 1.14 and the following paragraph]{MohsenMaxHypo}. A non trivial fact is that it doesn't depend on the choice of the presentation for representations in the characteristic set which is defined as follows. Let \begin{align}\label{eqn:af*}
 \gr(\cF)^*:=\sqcup_{x\in M}\gr(\cF)^*_x,\quad  \aF^*:=T^*M\times ]0,1]\sqcup \gr(\cF)^*\times \{0\}.
\end{align}The space $\aF^*$ is equipped with the weakest topology such that \begin{itemize}
\item the natural projection map $\aF^*\to M\times [0,1]$ is continuous
 \item for every $i\in \{1,\cdots,N\}$, $X\in \cF^i$, the map \begin{align*}
\aF^*&\to \R\\
 (\xi,x,t)&\to \xi(X(x))t^i,\quad t\in ]0,1],x\in M,\xi\in T_x^*M\\
 (\xi,x,0)&\to \xi ( [X]_x),\quad x\in M,\xi \in \gr(\cF)_x^*
 \end{align*} is continuous, where $[X]_x\in\frac{\cF^i}{\cF^{i-1}+I_x\cF^i}\subseteq \gr(\mathcal{D})_{x}$ denotes the class of $X$.
\end{itemize}
The space $\aF^*$ is locally compact Hausdorff by \cite[Prop. 2.10]{AS2}, see \cite[Section 1.3]{MohsenMaxHypo}. The set $T^*M\times ]0,1]$ isn't dense in $\aF^*$ in general. The topological characteristic set $\cT^*\cF\subseteq \gr(\cF)^*$ is defined by $$\cT^*\cF=\{\xi \in\gr(\cF)^*:(\xi,0)\in \overline{T^*M\times ]0,1]}\}.$$ We also define $\cT^*\cF_x:=\cT^*\cF\cap  \gr(\cF)_x^*$. 
\begin{theorem}[{\cite[Theorem B]{MohsenMaxHypo}}]\label{thm:charset}For any $x\in M$, \begin{itemize}
\item the set $\cT^*\cF_x$ is closed under the co-adjoint action of $\Gr(\cF)_x^*$. \item for any $\xi\in\cT^*\cF_x$, $\sigma^k(D,x,\pi_\xi)$ is well defined (independent of the choice of a presentation $D$ as sum of monomials), where $\pi_\xi$ is an irreducible unitary representation which corresponds to $\xi$ by the orbit method \cite{KirillovArticle,BrownArticleTopOrbitMethod}.
\end{itemize}
\end{theorem}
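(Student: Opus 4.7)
The plan is to present $\aF^*$ as the dual of a smooth Lie algebroid $\aF$ on $M\times[0,1]$—the adiabatic algebroid of the filtration $\cF^\bullet$—whose anchor rescales vector fields by $t^i$ on sections coming from $\cF^i$. This algebroid specialises at $t=0$ to the bundle of graded nilpotent Lie algebras $\gr(\cF)$ with zero anchor, and at each $t>0$ to an algebroid isomorphic to $TM$. I would work with its integrating Lie groupoid $\Gr(\aF)\rightrightarrows M\times[0,1]$, whose $t=0$ restriction is the bundle of simply connected nilpotent groups $\Gr(\cF)\to M$. The construction in the singular setting where $x\mapsto\dim\gr(\cF)_x$ may jump requires the blow-up machinery of \cite{MohsenBlowup}, which I would take as the main geometric input.

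For the first claim, $\Gr(\aF)$ acts on $\aF^*$ by coadjoint action, continuously across $t=0$. Its restriction to $t=0$ is the fiberwise coadjoint action of $\Gr(\cF)_x$ on $\gr(\cF)_x^*$, while at each $t>0$ it preserves $T^*M\times\{t\}$. Hence $\overline{T^*M\times(0,1]}\subseteq\aF^*$ is $\Gr(\aF)$-invariant, and its intersection with the fiber over $(x,0)$—which is exactly $\cT^*\cF_x$—is $\Gr(\cF)_x$-invariant. Concretely, given $\xi\in\cT^*\cF_x$ with approximating sequence $(\xi_n,x_n,t_n)\to(\xi,x,0)$ and $g\in\Gr(\cF)_x$, I would lift $g$ to a continuous family $g_n\in\Gr(\aF)_{(x_n,t_n)}$ and verify that $g_n\cdot\xi_n\in T^*M\times(0,1]$ converges to $\Ad^*(g)\xi$, placing the latter in $\cT^*\cF_x$.

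For the second claim, a differential operator $D\in\Diff^k_\cF(M,E,F)$ canonically lifts to a longitudinal differential operator $\tilde D$ on the source-fibers of $\Gr(\aF)$ by replacing each generator $X\in\cF^i$ with its $\aF$-lift $\tilde X$; its restriction at $(x,0)$ is a left-invariant operator on $\Gr(\cF)_x$. Two presentations of $D$ then yield $\tilde D_1,\tilde D_2$ whose difference restricts to zero on $M\times(0,1]$. Its $t=0$ restriction is therefore annihilated by any irreducible representation arising as a $t\to 0$ limit of the regular representations at positive $t$; by the orbit method these are exactly the $\pi_\xi$ with $\xi\in\cT^*\cF_x$, proving independence of presentation. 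The main obstacle I anticipate is the careful interplay between the filtration-dependent scaling $t^i$ and the orbit-method parameters of $\pi_\xi$: one must show that the limiting representations at $t=0$ are precisely indexed by the characteristic set as defined via the topology of $\aF^*$, which is essentially the content of combining the adiabatic-groupoid formalism with Kirillov's theorem.
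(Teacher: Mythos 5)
The paper does not actually prove this statement; it is quoted from \cite[Theorem B]{MohsenMaxHypo}, so your sketch can only be judged on its own merits. Its overall philosophy (adiabatic deformation, $t=0$ data controlled by limits from $t>0$) is indeed the right one, but two of your key steps fail as written. First, the framing: since $x\mapsto\dim\gr(\cF)_x$ jumps, there is no smooth Lie algebroid $\aF$ and no Lie groupoid $\Gr(\aF)$ here; $\aF^*$ is only a locally compact Hausdorff space, and the deformation of which it is the ``dual'' is the \emph{abelian} one: at $t>0$ the fibre is the abelian group bundle $TM$ (spectrum $T^*M$), at $t=0$ the abelian bundle $\sqcup_x\gr(\cF)_x$ (spectrum $\gr(\cF)^*$) --- exactly the bottom edge $[0,1]\times\{0\}$ of the square used in Section \ref{sec:index theorem}. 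Consequently the ``coadjoint action of $\Gr(\aF)$ on $\aF^*$, continuous across $t=0$'' on which your first argument rests is either undefined (at $t>0$ the relevant groupoid is the pair groupoid, whose isotropy is trivial and which has no canonical action on $T^*M$) or trivial (the coadjoint action of the abelian fibres), and in the latter case your limit argument gives $g_n\cdot\xi_n=\xi_n\to\xi$; since $\aF^*$ is Hausdorff this does not place $\Ad^*(g)\xi$ in the closure. The nonabelian structure at $t=0$ belongs to a different deformation (the edge $\{0\}\times]0,1]$), and the real content of the first bullet is to implement $\Ad^*(\exp Y)$ asymptotically at $t>0$, e.g.\ by pulling covectors back along the flows of the rescaled fields $t^iY$ and checking, using $[\cF^i,\cF^j]\subseteq\cF^{i+j}$, that the $t$-weighted pairings defining the topology of $\aF^*$ converge to the coadjoint translate; none of this estimate appears in your sketch.

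Second, your lift ``replace each generator $X\in\cF^i$ by its $\aF$-lift'' is presentation-\emph{dependent} already at $t>0$, because the power of $t$ attached to a monomial depends on which $\cF^i$ its factors are declared to lie in: for the Grushin filtration $\cF^1=\langle\partial_x,x\partial_y\rangle$, writing $x\partial_y$ as a weight-one generator gives $t\,x\partial_y$, while writing it as $\partial_y\circ x$ with $\partial_y\in\cF^2$ gives $t^2x\partial_y$. Hence the difference of the two lifts does \emph{not} vanish on $M\times]0,1]$, and the step on which your second bullet rests is false as stated. The presentation-independent family is $t^kD$ ($t>0$); well-definedness of $\sigma^k(D,x,\pi_\xi)$ then amounts to showing that the evaluation of its $t\to0$ limit against $\pi_\xi$ is determined by the $t>0$ data, i.e.\ precisely that the irreducible representations of $\Gr(\cF)_x$ arising as limits of the representations at $t>0$ are exactly the $\pi_\xi$ with $\xi\in\cT^*\cF_x$, which requires matching the topology of $\aF^*$ with the Fell topology on the spectrum of the noncommutative deformation via the orbit method (the kind of input this paper imports as \cite[Proposition 2.18]{MohsenMaxHypo} together with \cite[Chapter 3 Theorem 1]{LudwigBook}). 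You defer this identification in one sentence, but it is the heart of the theorem --- and, once available, it would also give the first bullet for free, since $\cT^*\cF_x$ would then be the preimage of a subset of $\gr(\cF)_x^*/\Ad^*(\Gr(\cF)_x)$ and hence automatically invariant. As it stands, the proposal assumes the two points it set out to prove.
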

\paragraph{Sobolev spaces.}
We define the Sobolev space $H^k_{\cF}(M,E)$ to be the set of distributions $u$ on $M$ with coefficients in $E$ such that for any differential operator\footnote{In \cite{MohsenMaxHypo}, we defined a pseudo-differential calculus where differential operators of order $k$ are pseudo-differential of order $k$. Generally one needs to allow $D$ to be a pseudo-differential operator. If $\cF^i$ are generated by iterated Lie brackets of $\cF^1$, then one can restrict to $D$ being a differential operator.} $D\in \Diff^k_{\cF}(M,E,E)$, one has $Du\in L^2(M,E)$. In \cite{MohsenMaxHypo}, we extend the definition to $H^s_{\cF}(M,E)$ for any $s\in \R$. It satisfies \begin{equation}\label{eqn:int_Sobolev_spaces}
 \bigcap_{s\in \R}H^s_{\cF}(M,E)=\Gamma(E)
\end{equation}
and if $s\in \R$ and $D\in \Diff^k_{\cF}(M,E,F)$, then $D:H^s_{\cF}(M,E)\to H^{s-k}_{\cF}(M,F)$ is bounded.
\paragraph{Maximal hypoellipticity.}
 The following theorem was conjectured by Helffer and Nourrigat in \cite{HelfferNourrigatCRAcSci}, see \cite[Conjecture 2.3]{HelfferNourrigatBook}. The implication ($2\implies 1$) was proven in \cite{HelfferNourrigatBook} as well as $(1\implies 2)$ in some cases.
\begin{theorem}[{\cite[Theorem C]{MohsenMaxHypo}}]\label{thm:max hypo}Let $D\in \Diff_{\cF}^k(M,E,F)$. The following are equivalent \begin{enumerate}
\item for any $x\in M$ and $\xi\in \cT^*\cF_x\backslash\{0\}$, the map $\sigma^k(D,x,\pi_\xi)$ is injective.
\item for any $s\in \R$, $u$ a distribution on $M$ with coefficients in $E$. If $Du\in H^{s-k}_{\cF}(M,F)$, then $u\in H^s_{\cF}(M,E)$.
\item there exists $s\in \R$ such that for any $u$ a distribution on $M$ with coefficients in $E$. If $Du\in H^{s-k}_{\cF}(M,F)$, then $u\in H^s_{\cF}(M,E)$.
\item for any $s\in \R$, the operator $D:H^s_{\cF}(M,E)\to H^{s-k}_{\cF}(M,F)$ is left invertible modulo compact operators.
\item there exists $s\in \R$, such that the operator $D:H^s_{\cF}(M,E)\to H^{s-k}_{\cF}(M,F)$ is left invertible modulo compact operators.
%\item for any $s>0$, there exists $C>0$ such that for any $u\in H^{k+s}_{\cF}(M,E)$, $$\norm{u}_{H^{k+s}_{\cF}(M,E)}\leq C(\norm{Du}_{H^{k}_{\cF}(M,F)}+\norm{u}_{H^{k}_{\cF}(M,E)}).$$
\end{enumerate}
 We say that $D$ is maximally hypoelliptic if it satisfies the above conditions. We say that $D$ is $*$-maximally hypoelliptic if $D$ and its formal adjoint $D^*$ are maximally hypoelliptic.
\end{theorem}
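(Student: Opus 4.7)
The chain of easy implications is $(2)\Rightarrow(3)$, $(4)\Rightarrow(5)$, and $(2)\Leftrightarrow(4)$ (and similarly $(3)\Leftrightarrow(5)$) via standard functional analysis: a left parametrix $Q\colon H^{s-k}_{\cF}\to H^{s}_{\cF}$ with $QD=\id+K$, $K$ compact, immediately promotes the distributional hypothesis $Du\in H^{s-k}_{\cF}$ to $u\in H^s_{\cF}$, while conversely the regularity in (2), together with the compact embeddings $H^s_{\cF}\hookrightarrow H^{s-1}_{\cF}$ (a Rellich-type statement from \cite{MohsenMaxHypo}) and the closed graph theorem, produces the a priori estimate $\|u\|_{H^s_{\cF}}\lesssim \|Du\|_{H^{s-k}_{\cF}}+\|u\|_{H^{s-1}_{\cF}}$, hence closed range and finite-dimensional kernel for $D\colon H^s_{\cF}\to H^{s-k}_{\cF}$. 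The entire content of the theorem therefore collapses to the pair $(1)\Leftrightarrow(5)$.

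For $(5)\Rightarrow(1)$ I would argue by contradiction. Suppose $\sigma^k(D,x,\pi_\xi)v=0$ for some non-zero smooth vector $v$ with $\xi\in\cT^*\cF_x\setminus\{0\}$. Since $\xi$ lies in the topological characteristic set, it is the limit in $\aF^*$ of points $(\eta_n,x_n,t_n)\in T^*M\times\,]0,1]$, and homogeneity of the natural dilation on $\aF^*$ lets me construct a sequence $u_n\in\Gamma(E)$, obtained from $v$ by cutting off, localizing near $x_n$, twisting by $e^{i\langle\eta_n,\cdot\rangle}$ and rescaling by $t_n$, such that $\|u_n\|_{H^s_{\cF}}=1$, $u_n\rightharpoonup 0$ weakly, and $\|Du_n\|_{H^{s-k}_{\cF}}\to 0$. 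Applying the putative left parametrix $Q$ of (5) at the corresponding $s$ gives $u_n=QDu_n-Ku_n$ with both terms tending to zero (the second by compactness of $K$ against the weak convergence), contradicting $\|u_n\|=1$.

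The hard direction is $(1)\Rightarrow(4)$, and this is where I would deploy the pseudodifferential calculus of \cite{MohsenMaxHypo} together with the $C^*$-algebra attached to the zero-fiber of $\aF^*$, namely the groupoid $C^*$-algebra of $\Gr(\cF)\rightrightarrows M$. The principal symbol $\sigma^k(D)$ is naturally a homogeneous multiplier of this $C^*$-algebra, and the topological characteristic set singles out exactly those irreducible representations that are detected by the inner part $T^*M\times\,]0,1]$ of $\aF^*$. So the pointwise hypothesis (1) says that $\sigma^k(D)$ is injective on every irreducible representation belonging to the sub-quotient $C^*$-algebra attached to $\cT^*\cF$. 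By a Rockland/Helffer--Nourrigat-type theorem for type-I $C^*$-algebras, this is equivalent to left-invertibility of $\sigma^k(D)$ modulo the ideal of elements vanishing on $\cT^*\cF$; the resulting inverse then lifts through the calculus of \cite{MohsenMaxHypo} to a parametrix $Q$ of order $-k$ with $QD=\id$ modulo smoothing operators. Smoothing operators are compact on every $H^s_{\cF}$ by \eqref{eqn:int_Sobolev_spaces} together with Rellich, yielding (4).

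The main obstacle is the Rockland-type step: passing from fiberwise injectivity on each $\Gr(\cF)_x$ to a uniform $C^*$-algebraic invertibility that can be lifted through the calculus. This is exactly where the co-adjoint invariance of $\cT^*\cF_x$ supplied by Theorem \ref{thm:charset} becomes crucial, because it lets Kirillov's orbit correspondence organise the relevant representations into a well-behaved continuous field; still, since $\dim\gr(\cF)_x$ need not be locally constant, one must handle jumps of fiber-dimension, and the blowup constructions together with the groupoid $C^*$-algebra formalism of \cite{MohsenMaxHypo} are precisely what one needs to control these strata in a uniform way.
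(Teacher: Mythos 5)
You should first note that this paper does not prove the statement at all: it is quoted verbatim as \cite[Theorem C]{MohsenMaxHypo} (the resolution of the Helffer--Nourrigat conjecture), so there is no internal proof to compare with, and your sketch has to be judged as an attempt at that external theorem. The decisive gap is in your step $(1)\Rightarrow(4)$: you invoke ``a Rockland/Helffer--Nourrigat-type theorem for type-I $C^*$-algebras'' to pass from injectivity of $\sigma^k(D,x,\pi_\xi)$ in every irreducible representation attached to $\cT^*\cF$ to left-invertibility of $\sigma^k(D)$ modulo the ideal of elements vanishing on $\cT^*\cF$. No such general theorem exists: for an element (or unbounded multiplier) of a type-I $C^*$-algebra, injectivity of its image in every irreducible representation does not imply left-invertibility, because one needs a lower bound that is uniform over the representations; producing that uniformity is exactly the analytic core of the conjecture. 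In \cite{MohsenMaxHypo} this is where the real work lies (homogeneity under the graded dilations, the orbit-method analysis in the style of Helffer--Nourrigat, and the groupoid/blowup machinery needed to control the jumps of $\dim\gr(\cF)_x$); invoking it as a black box restates the hard direction rather than proving it.

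Two further problems are smaller but real. First, $(4)\Rightarrow(2)$ and $(5)\Rightarrow(3)$ are not ``standard functional analysis'': an abstract left inverse modulo compacts $Q:H^{s-k}_{\cF}(M,F)\to H^{s}_{\cF}(M,E)$ is only a Hilbert-space operator, and the identity $QD=\id+K$ holds on $H^{s}_{\cF}(M,E)$, so it cannot be applied to a distribution $u$ not yet known to lie there; you need the parametrix to come from the calculus (hence act on distributions with smoothing error) or a separate regularization argument. This is repairable by rerouting the cycle as $(1)\Rightarrow(2)\Rightarrow(3)\Rightarrow(5)\Rightarrow(1)$ together with $(1)\Rightarrow(4)\Rightarrow(5)$, using the calculus parametrix your hard step is supposed to produce, but as written the claim that ``everything collapses to $(1)\Leftrightarrow(5)$'' rests on implications you have not justified. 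Second, in $(5)\Rightarrow(1)$ the quasimode recipe ``cut off, twist by $e^{i\langle\eta_n,\cdot\rangle}$, rescale'' is adequate only for the one-dimensional limit representations; when $\pi_\xi$ is infinite dimensional the null vector $v$ lives in $H_{\pi_\xi}\otimes E_x$ and must be transported back to functions on $M$ through the orbit-method realization and the anisotropic dilations, which is the substance of the necessity argument in \cite{HelfferNourrigatBook}. By contrast, your $(2)\Rightarrow(4)$ and $(3)\Rightarrow(5)$ via closed graph plus the Rellich-type compact embedding, and the trivial implications $(2)\Rightarrow(3)$, $(4)\Rightarrow(5)$, are fine.
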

Atkinson's theorem implies the following \begin{cor}\label{thm:max hypo_cor}Let $D\in \Diff_{\cF}^k(M,E,F)$. The following are equivalent \begin{enumerate}
\item there exists $s\in \R$, such that the operator $D:H^s_{\cF}(M,E)\to H^{s-k}_{\cF}(M,F)$ is Fredholm
\item $D$ is $*$-maximally hypoelliptic
\end{enumerate}
\end{cor}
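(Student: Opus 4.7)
The plan is to apply Atkinson's theorem, which characterizes Fredholm operators between Hilbert spaces as exactly those which are invertible modulo compact operators, and then match this with the one-sided invertibility modulo compacts appearing in condition (5) of Theorem \ref{thm:max hypo}. The only auxiliary input needed is that the Sobolev scale $H^s_\cF(M,E)$ is self-dual under the $L^2$ pairing (in the sense that $H^{-s}_\cF(M,E)$ is the topological dual of $H^s_\cF(M,E)$), so that the formal $L^2$-adjoint of $D:H^s_\cF(M,E)\to H^{s-k}_\cF(M,F)$ is precisely its Banach adjoint, namely $D^*:H^{k-s}_\cF(M,F)\to H^{-s}_\cF(M,E)$. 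This duality is part of the construction of the scale in \cite{MohsenMaxHypo}.

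For the direction $(2)\Rightarrow(1)$, assume $D$ and $D^*$ are both maximally hypoelliptic. By Theorem \ref{thm:max hypo}, applied with $s$ arbitrary, $D:H^s_\cF(M,E)\to H^{s-k}_\cF(M,F)$ admits a left inverse modulo compacts, and $D^*:H^{k-s}_\cF(M,F)\to H^{-s}_\cF(M,E)$ also admits a left inverse modulo compacts. Passing to Banach adjoints, the latter becomes a right inverse modulo compacts for $D:H^s_\cF(M,E)\to H^{s-k}_\cF(M,F)$. Since compact operators form a two-sided ideal and $D$ therefore has both a left and a right parametrix, it is invertible modulo compacts, hence Fredholm by Atkinson's theorem.

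For the direction $(1)\Rightarrow(2)$, assume $D:H^s_\cF(M,E)\to H^{s-k}_\cF(M,F)$ is Fredholm for some $s$. By Atkinson's theorem, $D$ is invertible modulo compacts, in particular left-invertible modulo compacts, so by Theorem \ref{thm:max hypo} condition (5), $D$ is maximally hypoelliptic. Taking the Banach adjoint and using the $L^2$-duality of the Sobolev scale, the formal adjoint $D^*:H^{k-s}_\cF(M,F)\to H^{-s}_\cF(M,E)$ is also Fredholm (since the Banach adjoint of a Fredholm operator between Hilbert spaces is Fredholm), and hence, by the same argument, maximally hypoelliptic. Therefore $D$ is $*$-maximally hypoelliptic.

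There is no real obstacle here: the corollary is a routine bookkeeping consequence of Atkinson's theorem and the equivalence (4)$\Leftrightarrow$(5) in Theorem \ref{thm:max hypo}. The only point requiring minor care is the compatibility between the formal $L^2$-adjoint and the Hilbert/Banach adjoint on the Sobolev scale, which follows from the duality $H^s_\cF(M,E)^*\simeq H^{-s}_\cF(M,E)$ built into the definition of the scale.
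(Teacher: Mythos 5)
Your proof is correct and follows essentially the route the paper intends: the paper proves this corollary simply by invoking Atkinson's theorem together with conditions (4)/(5) of Theorem \ref{thm:max hypo} applied to both $D$ and $D^*$, which is exactly what you spell out. The only ingredient you add explicitly, the identification of the Banach adjoint of $D:H^s_{\cF}(M,E)\to H^{s-k}_{\cF}(M,F)$ with $D^*:H^{k-s}_{\cF}(M,F)\to H^{-s}_{\cF}(M,E)$ via the $L^2$-duality of the Sobolev scale, is indeed the implicit technical glue coming from \cite{MohsenMaxHypo}, so your write-up is a faithful filling-in of the paper's one-line argument.
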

Elliptic operators are $*$-maximally hypoelliptic ($N=1$). Hörmander's sum of squares are also maximally hypoelliptic \cite{RotschildStein}. We refer the reader to \cite{HelfferNourrigatBook} for more details on maximally hypoelliptic operators.
\paragraph{Index formula.}Let $D\in \Diff^k_{\cF}(M,E,F)$ be a $*$-maximally hypoelliptic operator. By Theorem \ref{thm:max hypo} and Atkinson's theorem, $$D:H^s_{\cF}(M,E)\to H^{s-k}_{\cF}(M,F),\quad \forall s\in \R$$ is Fredholm. By Theorem \ref{thm:max hypo}.2 and \eqref{eqn:int_Sobolev_spaces}, $\ker(D)\subseteq \Gamma(E)$. Applying the same argument to $D^*$, we deduce that the Fredholm index of $D:H^s_{\cF}(M,E)\to H^{s-k}_{\cF}(M,F)$ doesn't depend on $s$ and is equal to the analytic index given by \eqref{eqn:analytic index cinf}. We now give a formula to compute the index using the principal symbol defined above.

 Let $x\in M$. We define $$ C^*\cT\cF_x:= \frac{C^*\Gr(\cF)_x}{\bigcap_{\xi\in \cT^*\cF_x}\ker(\pi_\xi)}.$$ In other words we only look at representations in the characteristic set. Even though the bundle $\Gr(\cF)_x$ as $x$ varies isn't a fiber bundle in the usual topological sense because of the jumps in dimension, one can still make sense of its $C^*$-algebra. In particular, in Section \ref{sec:variant}, we define a $C^*$-algebra $C^*\cT\cF$ which is fibered over $M$ with fiber $C^*\cT\cF_x$. We also define an unbounded multiplier $$\sigma^k(D):\dom(\sigma^k(D))\subseteq \Gamma(E)\otimes_{C(M)} C^*\cT\cF\to \Gamma(F)\otimes_{C(M)} C^*\cT\cF $$ which has the property that for $x\in M$ and $\xi\in \cT^*\cF_x$, one has \begin{equation}\label{eqn:princip rep eqn multi}
   \pi_\xi(\sigma^k(D))=\sigma^k(D,x,\pi_\xi).
\end{equation}

By \cite[Theorem 3.47]{MohsenMaxHypo}, $\sigma^k(D)$ is a Fredholm multiplier. This means that $\sigma^k(D)$ is regular in the sense of Baaj-Julg \cite{BaajJulg} and its bounded transform is Fredholm. Hence $$[\sigma^k(D)]\in K_0(C^*\cT\cF)$$ is well defined. To state our index formula, we need three maps.
\begin{itemize}
\item In Section \ref{sec:variant}, we define a map $$\mu_{\cT^*\cF}:K^0(\cT^*\cF)\to K_0(C^*\cT\cF)$$ which is a localized version of the Connes-Thom isomorphism. We show that $\mu_{\cT^*\cF}$ is an isomorphism. Here $K^0(\cT^*\cF)$ is the topological $K$-theory with compact support of the space $\cT^*\cF$ which inherits the subspace topology from $\aF^*$.
\item By applying Excision to the locally compact Hausdorff space $$T^*M\times ]0,1]\sqcup \cT^*\cF\times \{0\}\subseteq \aF^*,$$ we get a map $$\Ex:K^0(\cT^*\cF)\to K^0(T^*M).$$
\item We have the Atiyah-Singer index map $$\Ind_{AS}:K^0(T^*M)\to \Z.$$
\end{itemize}
\begin{thmx}\label{main thm}Let $D\in \Diff^k_{\cF}(M,E,F)$ be $*$-maximally hypoelliptic. Then $$\Ind_a(D)=\Ind_{AS}\circ \Ex\circ \mu_{\cT^*\cF}^{-1}([\sigma^k(D)]).$$ 
\end{thmx}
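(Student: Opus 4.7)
The plan is to follow Connes's deformation approach \cite{ConnesBook} to the Atiyah--Singer theorem, combined with the localized Connes--Thom isomorphism of Section \ref{sec:variant}. I first introduce the locally compact Hausdorff space
\[
\cT^*\aF := T^*M\times ]0,1]\sqcup \cT^*\cF\times \{0\}\subseteq \aF^*,
\]
together with an associated $C^*$-algebra $C^*\cT\aF$, constructed as a continuous field over $[0,1]$ whose fiber at $t=0$ is $C^*\cT\cF$ and whose fiber at every $t>0$ is (stably) $\cK(L^2(M))$. The construction combines the adiabatic-groupoid philosophy with the singular-foliation $C^*$-algebras of Androulidakis--Skandalis and the blowup techniques of \cite{MohsenBlowup}. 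The evaluations $e_0:C^*\cT\aF\to C^*\cT\cF$ and $e_1:C^*\cT\aF\to \cK$ fit into short exact sequences; since $\ker(e_0)\cong C_0(]0,1])\otimes \cK$ has vanishing $K$-theory, $(e_0)_*$ is an isomorphism on $K_0$.

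Next, I lift $\sigma^k(D)$ to a Fredholm multiplier $\widetilde{D}$ of $C^*\cT\aF$ in the sense of Baaj--Julg, whose evaluation at $t=0$ is $\sigma^k(D)$ and whose evaluation at $t=1$ is (the bounded transform of) $D$ itself on $L^2(M,E)$. The lift exploits the rescaling built into $\aF^*$, under which an $\cF$-monomial of order $k$ is multiplied by $t^k$, interpolating smoothly between $D$ at $t=1$ and its principal symbol at $t=0$. This produces a class $[\widetilde D]\in K_0(C^*\cT\aF)$ satisfying
\[
(e_0)_*[\widetilde D]=[\sigma^k(D)],\qquad (e_1)_*[\widetilde D]=\Ind_a(D)\in K_0(\cK)=\Z,
\]
so that the analytic index is realised as the composite $(e_1)_*\circ (e_0)_*^{-1}:K_0(C^*\cT\cF)\to \Z$.

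The final step is to extend the localized Connes--Thom isomorphism to the whole deformation, producing $\mu_{\cT^*\aF}:K^0(\cT^*\aF)\xrightarrow{\sim} K_0(C^*\cT\aF)$ natural under restriction to closed invariant subsets. Applying naturality to the closed inclusions $i_0:\cT^*\cF\times\{0\}\hookrightarrow \cT^*\aF$ and $i_1:T^*M\times\{1\}\hookrightarrow \cT^*\aF$ yields commutative squares identifying $(e_0)_*$ with $\mu_{\cT^*\cF}\circ i_0^*$ and $(e_1)_*$ with $\mu_{T^*M}\circ i_1^*$, where the map $\mu_{T^*M}:K^0(T^*M)\to K_0(\cK)=\Z$ is the Connes--Thom map for the classical tangent groupoid at $t=1$, which by Connes's original argument equals $\Ind_{AS}$. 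Since $\Ex$ is by construction $i_1^*\circ (i_0^*)^{-1}$ (with $(i_0^*)^{-1}$ well defined because $K^*(T^*M\times ]0,1])=0$), one obtains $\Ind_a(D)=(e_1)_*\circ (e_0)_*^{-1}[\sigma^k(D)]=\Ind_{AS}\circ \Ex\circ \mu_{\cT^*\cF}^{-1}([\sigma^k(D)])$.

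The main difficulty I anticipate is the construction and analysis of the deformation $C^*$-algebra $C^*\cT\aF$ together with the Fredholm lift $\widetilde D$ interpolating $D$ and its symbol; a secondary but essential task is proving naturality of the localized Connes--Thom map along the deformation, which requires extending the Section \ref{sec:variant} argument to continuous fields over $[0,1]$.
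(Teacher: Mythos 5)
Your proposal follows essentially the same route as the paper's proof: your interpolating Fredholm multiplier $\widetilde D$ is the multiplier $\Theta(D)$ of the edge algebra $C^*_z\taD_{|[0,1]\times\{1\}}$ constructed in \cite{MohsenMaxHypo}, the identification of the $t=1$ index map with $\Ind_{AS}$ is Connes's lemma, and your description of $\Ex$ as $i_1^*\circ (i_0^*)^{-1}$ is exactly the paper's definition of the excision map. The one step you defer---existence and naturality of a localized Connes--Thom map along the $t$-deformation---is precisely what the paper supplies by introducing a second deformation parameter, namely the blowup $C^*$-algebra $C^*_z\taD$ of the singular foliation $\taD$ on $M\times[0,1]^2$, whose evaluation at $(0,0)$ is a $K$-theory isomorphism and gives the required compatibility $\Ind_{\cF}\circ\mu_{\cT^*\cF}=\Ind_{AS}\circ\Ex$; so there is no gap in the idea, only that remaining construction, which reproduces the paper's square.
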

The maps $\Ex$ and $\Ind_{AS}$ are topological maps while $\mu_{\cT^*\cF}$ is given by excision at the level of noncommutative $C^*$-algebras. We prove that the map $\mu_{\cT^*\cF}$ is an isomorphism by using the series decomposition of the $C^*$-algebra of nilpotent Lie groups. We show that one has an isomorphism at every step in the decomposition. In principle, to invert $\mu$, one can go back through each isomorphism. This is essentially a Mayer Vietoris argument on the space of representations. This is rather straightforward to do if there aren't many representations of different type. One has the following theorem which gives restrictions on the characteristic set.
\begin{thmx}\label{second_main_thm}If $x\in M$, $\xi\in \cT^*\cF_x$, then the co-adjoint orbit $Ad^*(\Gr(\cF)_x)\cdot \xi$ has dimension $\leq 2\dim(M)$.
\end{thmx}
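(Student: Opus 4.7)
The orbit $\Ad^*(\Gr(\cF)_x)\cdot\xi$ has dimension equal to the rank of the skew-symmetric bilinear form
\[
  A:\gr(\cF)_x\times\gr(\cF)_x\to\R,\qquad A(Y,Z)=\xi([Y,Z]),
\]
so the plan is to show directly that $\rank(A)\leq 2\dim(M)$. The strategy is to realise $A$ as a limit of the canonical symplectic pairing on $T^*M$, whose rank is $2\dim(M)$, and then read off the dimension bound from this limit.

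By definition of $\cT^*\cF_x$, pick a sequence $(x_n,\xi_n,t_n)\in T^*M\times ]0,1]$ with $t_n\to 0$, $x_n\to x$, and $t_n^i\xi_n(X(x_n))\to\xi([X]_x)$ for every $X\in\cF^i$. For a vector field $X$ on $M$, write $\tilde X:T^*M\to\R$, $(y,\eta)\mapsto\eta(X(y))$, and denote by $H_{\tilde X}$ its Hamiltonian vector field for the canonical symplectic form $\omega$ on $T^*M$. Combining the classical identity $\{\tilde X,\tilde W\}=\widetilde{[X,W]}$ with bilinearity, and applying the defining limit of $\xi$ to $[X,W]\in\cF^{i+j}$, one obtains, for $X\in\cF^i$ and $W\in\cF^j$,
\[
  A([X]_x,[W]_x)
  \;=\;\lim_n t_n^{i+j}\xi_n\bigl([X,W](x_n)\bigr)
  \;=\;\lim_n \omega_{(x_n,\xi_n)}\bigl(t_n^i H_{\tilde X}(x_n,\xi_n),\,t_n^j H_{\tilde W}(x_n,\xi_n)\bigr).
\]

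The conclusion then comes from a contradiction argument. Assume $\rank(A)\geq 2(\dim M+1)$. Darboux's normal form for skew-symmetric bilinear forms produces $Y_1,\dots,Y_k,Z_1,\dots,Z_k\in\gr(\cF)_x$ with $k=\dim M+1$ and $A(Y_\alpha,Z_\beta)=\delta_{\alpha\beta}$, $A(Y_\alpha,Y_\beta)=A(Z_\alpha,Z_\beta)=0$. Lift each $Y_\alpha$, $Z_\alpha$ to $X_\alpha\in\cF^{i_\alpha}$, $W_\alpha\in\cF^{j_\alpha}$, and form the $2k$ tangent vectors
\[
  v_\alpha^{(n)}:=t_n^{i_\alpha}H_{\tilde X_\alpha}(x_n,\xi_n),\qquad w_\alpha^{(n)}:=t_n^{j_\alpha}H_{\tilde W_\alpha}(x_n,\xi_n)\;\in\; T_{(x_n,\xi_n)}(T^*M).
\]
By the displayed limit, the $2k\times 2k$ Gram matrix of these vectors under $\omega_{(x_n,\xi_n)}$ converges to the standard symplectic matrix, hence is non-degenerate for $n$ large. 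Non-degeneracy of the Gram matrix forces the $v_\alpha^{(n)},w_\alpha^{(n)}$ to be linearly independent in $T_{(x_n,\xi_n)}(T^*M)$, contradicting $\dim T_{(x_n,\xi_n)}(T^*M)=2\dim(M)<2k$.

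The only non-routine point is establishing the displayed limit formula, which is however tautological from the definition of $\cT^*\cF_x$ combined with the standard Poisson-geometric identity on $T^*M$. Note that $\xi_n$ may well be unbounded, since one is rescaling fibres by $t_n^i$, but this is harmless: the argument uses only non-degeneracy of the Gram matrix of the $v_\alpha^{(n)},w_\alpha^{(n)}$, not their boundedness. In spirit the proof is the lower-semicontinuity of the rank of the Poisson bivector on the dual of the deformation $\aF^*$, executed by hand because the fibres of $\aF$ jump in dimension and so no standard Lie-algebroid Poisson structure is directly available.
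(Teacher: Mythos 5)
Your proof is correct, but it takes a genuinely different route from the paper. The paper deduces the bound from the blowup machinery of \cite{MohsenBlowup}: any $\xi\in\cT^*\cF_x$ vanishes on a subspace $V\in\blup(\taD)_{(x,0,1)}$ of codimension $\dim M$ in $\gr(\cF)_x$, this $V$ is a Lie subalgebra and hence isotropic for $B_\xi(X,Y)=\xi([X,Y])$, and a skew form with an isotropic subspace of codimension $m$ has rank at most $2m$. You instead work straight from the definition of $\cT^*\cF_x$ as a limit set, writing $B_\xi$ as the limit of the canonical symplectic pairing of the rescaled Hamiltonian vector fields $t_n^{i}H_{\tilde X}$ at $(x_n,\xi_n)\in T^*M$ (via $\{\tilde X,\tilde W\}=\widetilde{[X,W]}$) and bounding $\rank B_\xi$ by $\dim T_{(x_n,\xi_n)}(T^*M)=2\dim M$ through a Gram-matrix semicontinuity argument. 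Your route is more elementary and self-contained: it needs no blowup space, only the topology of $\aF^*$; the paper's route, besides being very short once \cite{MohsenBlowup} is available, gives the stronger structural fact that $\xi$ kills a Lie subalgebra of codimension $\dim M$, not merely the rank bound. Three small points to tidy in your write-up: (i) sequences suffice because the $\cF^i$ are finitely generated, so $\aF^*$ embeds in $M\times[0,1]\times\R^K$ and is metrizable; (ii) the Darboux vectors $Y_\alpha,Z_\alpha$ need not be homogeneous, so lift each homogeneous component $Y_\alpha^{(i)}$ to some $X_\alpha^{(i)}\in\cF^i$ and set $v_\alpha^{(n)}=\sum_i t_n^{i}H_{\tilde X_\alpha^{(i)}}(x_n,\xi_n)$; the limit formula then extends by bilinearity; (iii) when $i+j>N$ the bracket in $\gr(\cF)_x$ vanishes for degree reasons, and this is consistent with your limit since $[X,W]\in\cF^N$ gives $t_n^{i+j}\xi_n([X,W](x_n))=t_n^{i+j-N}\cdot t_n^{N}\xi_n([X,W](x_n))\to 0$. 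With these adjustments the argument is complete.
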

So even though $\dim(\Gr(\cF)_x)-\dim(M)$ is arbitrary large, the representations obtained from the characteristic set by the orbit method act on $L^2\R^d$ with $d\leq \dim(M)$. Theorem \ref{second_main_thm} is proved in the end of Section \ref{sec:index theorem}.

 Before we give an example, we remark that generally $\cT^*\cF_x$ is a proper subset of the following two sets and so neither of them can be used for our index formula \begin{itemize}
\item the set of $\xi\in \gr(\cF)_x^*$ such that $Ad^*(\Gr(\cF)_x)\cdot \xi$ has dimension $\leq 2\dim(M)$, see the examples in Section \ref{sec:Examp}.
\item the set of $\xi\in \gr(\cF)_x^*$ such that $\sigma^k(D,x,\pi_\xi)$ is well defined for any $k\in \N$, $D\in \Diff_{\cF}^k(M,E,F)$, see \cite[Examples 1.16]{MohsenMaxHypo}.
\end{itemize}
\begin{ex}\label{ex:examplehjisdqf}
 Let $f:S^1\to \R,$ $g:S^1\times S^1\to \C$ be smooth functions. We suppose that for each $x\in f^{-1}(0)$, the order of vanishing of $x$ is finite. Let $k$ be the maximum of the order of vanishing of $x$ for all $x\in f^{-1}(0)$ and $f^{-1}_k(0)$ the set of $x\in f^{-1}(0)$ with order of vanishing $k$. We suppose $k$ is odd. Consider the differential operator on $S^1\times S^1$  $$D=(\partial_x^2+(f(x)\partial_y)^2)^{\frac{k+1}{2}}+ g(x,y)\sqrt{-1}\partial_y$$
  Then $D$ is $*$-maximally hypoelliptic if and only if  $$g(x,y)\notin \pm\mathrm{spec}\left(\left(\partial_z^2-\left(\frac{f^{(k)}(x)}{k!}z^k\right)^{2}\right)^{\frac{k+1}{2}}\right),\quad \forall (x,y)\in f^{-1}_k(0)\times S^1,$$ where $\left(\partial_z^2-\left(\frac{f^{(k)}(x)}{k!}z^k\right)^{2}\right)^{\frac{k+1}{2}}$ acts on $L^2(\R)$. One then has \begin{equation}\label{eqn:index sec 1}
     \Ind_a(D)=\sum_{x\in f^{-1}_k(0) }\sum_{\lambda}w(\lambda-g(x,\cdot))-w(\lambda + g(x,\cdot)).
\end{equation} where $w$ is the winding number around $0$, and $\sum_{\lambda}$ is the sum over the spectrum of $$\left(\partial_z^2-\left(\frac{f^{(k)}(x)}{k!}z^k\right)^{2}\right)^{\frac{k+1}{2}}.$$ One can suppose that $g\in \Gamma(\End(E))$ where $E$ is a vector bundle, in which case one replaces $\partial_x$ and $\partial_y$ with $\nabla_{\partial_x},\nabla_{\partial_y}$ for any connection on $E$ and the winding number with the $K^1$ class $[\lambda\pm g(x,\cdot)]\in K^1(S^1)=\Z$. This example is generalised and worked out in details in Section \ref{sec:Examp}. To this end we need the definition of the map $\mu_{\cT^*\cF}$ in the next section.
\end{ex}
\section{Localized Connes-Thom isomorphism}\label{sec:variant}
\paragraph{Excision map.}Let $B$ be a $C^*$-algebra fibered over $[0,1]$ in the sense of Kasparov \cite{KasparovInvent}, $A$ and $C$ the fibers at $1$ and $0$ respectively. We suppose that $B$ is constant over $]0,1]$. Hence, we have an exact sequence \begin{equation}\label{eqn:index map seq}
 0\to A\otimes C_0(]0,1])\to B\xrightarrow{\ev_0} C\to 0.
\end{equation} Since $C_0(]0,1])$ is contractible, the $6$-term exact sequence implies that $$K(\ev_0):K_*( B)\to K_*(C)$$ is an isomorphism. We define \begin{equation}
\mu:K_*(C)\xrightarrow{K(\ev_0)^{-1}}K_*(B)\xrightarrow{K(\ev_1)}K_*(A).
\end{equation} 
 This construction is functorial, in the following sense. Suppose one has a $C([0,1])$-homomorphism map $\phi:B\to B'$. Then the following diagram commutes $$\begin{tikzcd}K_*(C)\arrow[r,"\mu"]\arrow[d]&K_*(A)\arrow[d,"\phi"]\\K_*(C')\arrow[r,"\mu'"]&K_*(A').
 \end{tikzcd}$$
\paragraph{Subquotients of $C^*$-algebras.} Let $A$ be a $C^*$-algebra, $Z$ a locally closed subsets of the spectrum $\hat{A}$ of $A$. One defines a subquotient of $A$ as follows. If $Z=D\cap F\subseteq \hat{A}$ with $D$ open and $F$ closed, then one defines \begin{align*}
 I_Z:=\frac{\bigcap_{\pi \in D^c}\ker(\pi)}{\bigcap_{\pi \in D^c\cup F}\ker(\pi)}.
\end{align*}
The $C^*$-algebra $I_Z$ up to a canonical isomorphism is independent of the choice of $D,F$ such that $Z=D\cap F$. Moreover $\hat{I}_Z$ is homeomorphic to $Z$. This gives an bijection between locally closed subsets of $\hat{A}$ and subquotients of $A$, see \cite[Remark 2.3]{FourierTransformNilpotentGroups} and \cite[Lemma 7.5.3]{PhillipsEquivKtheoryBook}.
\paragraph{Localized Connes-Thom.} Let $\mathfrak{g}$ be a nilpotent Lie algebra, $G$ the simply connected Lie group integrating $\mathfrak{g}$. We define a family of Lie algebras $(\mathfrak{g}_t)_{t\in [0,1]}$ where $\mathfrak{g}_t=\mathfrak{g}$ with the Lie bracket $$[X,Y]_{\mathfrak{g}_t}=t[X,Y].$$ Let $G_t$ be the simply connected Lie group integrating $\mathfrak{g}_t$, $\overline{G}=\sqcup_{t\in [0,1]}G_t$ the bundle over $[0,1]$ of groups. Then the groups $G_t$ for $t>0$ are isomorphic to $G$ by multiplication by $t$. Hence one has short exact sequence $$0\to C^*G\otimes C_0(]0,1])\to C^*\overline{G}\to C_0(\mathfrak{g}^*)\to 0.$$
By \cite[Chapter 3 Theorem 1]{LudwigBook}, the spectrum $\widehat{C^*\overline{G}}=\sqcup_{t\in ]0,1]}\mathfrak{g}^*/Ad^*(G)\sqcup \mathfrak{g}^*$ and the topology is the quotient of $\mathfrak{g}^*\times [0,1]$. In the previous identification, if $Z\subset\mathfrak{g}^*\times \{t\}$ is a closed under the coadjoint action, then it corresponds to a subset $\widehat{C^*G_t}$ which then corresponds to a subset of $\widehat{C^*G}$ but there is a factor $t$ in this identification. This is irrelevant in what follows because we will only take $\R_+^\times$-invariant sets. 

Let $Z\subseteq \mathfrak{g}^*$ be a locally closed subset which is $\R_+^\times$ and $Ad^*(G)$ invariant. The $\R_+^\times$-structure on $\mathfrak{g}$ comes from the standard vector space dilation. The set $Z\times [0,1]\subseteq \mathfrak{g}^*\times [0,1]$ gives a locally closed subset of $\widehat{C^*\overline{G}}$. Let $\bar{I}_Z:=I_{Z\times [0,1]}$ be the corresponding subquotient. Then one has a short exact sequence $$0\to I_Z\otimes C_0(]0,1])\to \bar{I}_Z\to C_0(Z)\to 0.$$ Hence we get a map $$\mu_Z:K^*(Z)\to K_*(I_Z),$$ where $K^*(Z)$ is topological $K$-theory with compact support of $Z$.
\begin{theorem}\label{thm 1}
The map $\mu_Z$ is an isomorphism.
\end{theorem}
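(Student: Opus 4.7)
The overall strategy is to reduce Theorem \ref{thm 1} to the classical Connes--Thom isomorphism of \cite{ConnesThom} via a five-lemma argument carried out along a stratification of $\mathfrak{g}^*$ by coadjoint orbit type.

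First I would establish the naturality of $Z \mapsto \mu_Z$. Given an $Ad^*(G)$- and $\R_+^\times$-invariant open subset $U \subseteq Z$ with closed complement $F = Z \setminus U$, one obtains a short exact sequence $0 \to \bar{I}_U \to \bar{I}_Z \to \bar{I}_F \to 0$ of $C([0,1])$-algebras; the evaluations at $t = 0$ and $t = 1$ intertwine this with the sequences $0 \to C_0(U) \to C_0(Z) \to C_0(F) \to 0$ and $0 \to I_U \to I_Z \to I_F \to 0$. Passing to $K$-theory and invoking the functoriality of $\mu$ established at the end of the excerpt, one gets a commutative ladder of six-term exact sequences whose vertical arrows are $\mu_U, \mu_Z, \mu_F$. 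By the five lemma, if the theorem holds for any two of $U, Z, F$ then it also holds for the third. The base case $Z = \mathfrak{g}^*$ gives $I_Z = C^*G$ and $\mu_Z$ reduces by construction to the Connes--Thom isomorphism, so the theorem is known in this case.

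To handle an arbitrary invariant locally closed $Z$, I would pick a finite $Ad^*(G)$- and $\R_+^\times$-invariant filtration of $\mathfrak{g}^*$ by closed subsets whose successive differences are the smooth layers of the Pukanszky/Kirillov stratification by coadjoint orbit dimension; both the coadjoint and the dilation action preserve this stratification. On each stratum $S$, the quotient $B := S/Ad^*(G)$ is a smooth manifold, the orbit map $S \to B$ is a locally trivial fiber bundle whose fibers are affine spaces $\R^{2d_S}$ (every coadjoint orbit of a simply connected nilpotent Lie group is diffeomorphic to an affine space), and by Brown's theorem the corresponding subquotient of $C^*G$ is Morita equivalent to $C_0(B)$. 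Intersecting the filtration with $Z$ and iterating the naturality step by downward induction reduces the problem to the single-stratum case $Z \subseteq S$.

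On a single stratum the verification is essentially a parameterized, fiberwise Connes--Thom isomorphism: $\bar{I}_Z$ should unfold into a continuous field over $B$ whose fibers at $t = 0$ are $C_0(\R^{2d_S})$ on each orbit and whose fibers at $t > 0$ are $\mathcal{K}$, and Connes--Thom applied orbit by orbit then yields $\mu_Z$ after identifying $K^*(Z) \cong K^*(B)$ via the Thom isomorphism of the orbit bundle. The main obstacle in this last step is to construct a polarization in families over $B$ so that the fibered identification of $\bar{I}_Z$ is compatible with both evaluations, and to check that the fiberwise isomorphisms glue continuously over $B$. The $\R_+^\times$-invariance of $Z$ plays an essential role here, because it absorbs the scaling factor in the identification $G_t \simeq G$ for $t > 0$ and so eliminates the asymmetry between $t = 0$ and $t > 0$ that would otherwise obstruct gluing.
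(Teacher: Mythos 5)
Your overall architecture mirrors the paper's proof: stratify $\mathfrak{g}^*$ by invariant sets, use functoriality of $\mu$ plus the six-term sequence and the five lemma to reduce to a single stratum, and there identify $\mu$ with a Bott/Thom-type isomorphism. But the proposal has a genuine gap exactly at the point where all the analytic content sits: on a single stratum you say the verification is "essentially a parameterized, fiberwise Connes--Thom isomorphism" and then explicitly defer the construction of a family of polarizations over $B$ and the continuity of the fiberwise identifications --- and you never resolve this. Applying the Connes--Thom theorem orbit by orbit does not by itself produce an isomorphism for the subquotient $I_Z$, nor does it identify the resulting isomorphism with the particular map $\mu_Z$ defined by the deformation $\bar{I}_Z$; both the gluing over $B$ and the comparison of the two maps are precisely what has to be proved. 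The paper avoids constructing polarizations in families altogether: it invokes the fine stratification of Pedersen \cite{PedersenGeometricQuantNilpotent} and the structure theorem of \cite{FourierTransformNilpotentGroups}, which provide simultaneously a diffeomorphism $\tilde{V}_i\setminus\tilde{V}_{i-1}\simeq (V_i\setminus V_{i-1})\times\R^{2d}$ and a trivialization $I_{\tilde{V}_i\setminus\tilde{V}_{i-1}}\simeq C_0(V_i\setminus V_{i-1})\otimes\cK(L^2\R^d)$; after intersecting with $Z$, the statement that $\mu_i'$ is an isomorphism reduces to the known fact that the deformation map of this kind is Bott periodicity, which is quoted from \cite[Lemma 6, p.~109]{ConnesBook}. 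In other words, the "main obstacle" you name is not an afterthought to be checked but the step the cited structure theorems are there to replace.

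A secondary caveat: the coarse Pukanszky/Kirillov stratification by coadjoint orbit dimension is not enough for the properties you assert. That the orbit space of a stratum is Hausdorff and a manifold, that the orbit map admits continuous cross-sections, and that the corresponding subquotient of $C^*G$ is isomorphic to $C_0(B)\otimes\cK$ (rather than merely a continuous-trace algebra with a possibly nontrivial Dixmier--Douady obstruction) are consequences of the \emph{fine} stratification with its explicit cross-sections, not of constancy of orbit dimension alone; Brown's theorem only gives the homeomorphism $\mathfrak{g}^*/Ad^*(G)\simeq\widehat{G}$. You would also need to check that your strata are invariant under the dilations $\R_+^\times$, which the paper gets for free from the homogeneity of the sets $\tilde{V}_i$ in the cited decomposition. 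With those references substituted for your coarse stratification, your five-lemma skeleton is sound, but as written the single-stratum case --- and hence the theorem --- is not established.
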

\begin{proof}
We fix a decomposition $Z=D\cap F$ with $D$ open and $F$ closed throughout the proof. %It is enough to prove that $K_*(\ker(\ev_1))=0$.
We will use the decomposition of $C^*G$ given in \cite[Theorem 4.11]{FourierTransformNilpotentGroups} based on the stratification in \cite{PedersenGeometricQuantNilpotent} of the unitary dual. There exists for some $n\in \N$, a sequence $$0=\cJ_0\subsetneq \cJ_1\subsetneq\cdots \subsetneq\cJ_n=C^*G$$ of ideals which satisfy the properties in \cite[Definition 2.9]{FourierTransformNilpotentGroups}. We will use the notation used there. Each ideal $\cJ_{i}=I_{\tilde{V_i}}$ where $\tilde{V_i}\subseteq \mathfrak{g}^*$ is an $\R_+^\times$ and $Ad^*(G)$ invariant open set. The quotient $V_i:=\tilde{V}_i/G\subseteq \mathfrak{g}^*/G$ is described in \cite[Proposition 4.5]{FourierTransformNilpotentGroups}. Since each $\tilde{V_i}$ is $\R_+^\times$-invariant and $Ad^*(G)$-invariant, one has a decomposition  $$0=\bar{I}_{Z\cap \tilde{V}_0}\subset \bar{I}_{Z\cap \tilde{V}_1}\subset\cdots \subset \bar{I}_{Z\cap \tilde{V}_n}=\bar{I}_Z.$$ For each $i$, there is a exact sequences $$0\to I_{Z\cap \tilde{V}_i}\otimes  C_0(]0,1])\to \bar{I}_{Z\cap \tilde{V}_i}\to C_0(Z\cap \tilde{V}_{i})\to 0,$$ and hence maps $$\mu_i:K^*\left(Z\cap \tilde{V}_{i}\right)\to K_*\left(I_{Z\cap \tilde{V}_i}\right).$$
We will prove that each $i$ is an isomorphism by induction. To this end,  the short exact sequence $$0\to I_{Z\cap (\tilde{V}_{i}\backslash \tilde{V}_{i-1})}\otimes C_0(]0,1]) \to \bar{I}_{Z\cap (\tilde{V}_{i}\backslash \tilde{V}_{i-1})}\to C_0(Z\cap (\tilde{V}_{i}\backslash \tilde{V}_{i-1}))\to 0$$gives map \begin{equation}\label{eqn:mu proof}
 \mu_{i}':K^*\left(Z\cap (\tilde{V}_{i}\backslash \tilde{V}_{i-1})\right)\to K_*\left(I_{Z\cap (\tilde{V}_{i}\backslash \tilde{V}_{i-1})}\right).
\end{equation} Functoriality of $\mu$, the $6$-term exact sequence in $K$-theory and the $5$-lemma imply that it is enough to prove that for each $i$, $\mu_i'$ is an isomorphism. It is shown in \cite[Lemma 1.6.1]{PedersenGeometricQuantNilpotent} that $\tilde{V}_{i}\backslash \tilde{V}_{i-1}$ is diffeomorphic to $(V_i\backslash V_{i-1})\times \R^{2d}$ for some $d$ (which may depend on $i$). Since $Z$ is $Ad^*(G)$ invariant, it is mapped by this diffeomorphism to $\left((Z/Ad^*(G))\cap (V_i\backslash V_{i-1})\right)\times \R^{2d}$. Furthermore by \cite[Theorem 4.11]{FourierTransformNilpotentGroups} $V_i\backslash V_{i-1}$ is a semi-algebraic variety, in particular locally compact Hausdorff and $I_{\tilde{V}_{i}\backslash \tilde{V}_{i-1}}\simeq C_0(V_i\backslash V_{i-1})\otimes \cK(L^2\R^d)$. By Morita equivalence, we deduce that $$K_*\left(I_{Z\cap(\tilde{V_i}\backslash \tilde{V}_{i-1})}\right)=K^*((Z/Ad^*(G))\cap (V_i\backslash V_{i-1})).$$It follows that the two sides of \eqref{eqn:mu proof} are isomorphic by a Bott periodicity. It thus suffices to see that $\mu_i'$ is the Bott periodicity. This follows from \cite[Lemma 6 on Page 109]{ConnesBook}.\end{proof}
%To prove that $\mu_Z$ is an isomorphism, it suffices to show that $\ev_1:\bar{I}_Z\to I_Z$ is an isomorphism in $K$-theory. By the $6$-term exact sequence, it suffices to show that $K_*(\ker(\ev_1))=0$. We will show by induction that $\ker(\ev)_1\cap \bar{I}_{Z\cap \tilde{V}_i}=0 $ for all $i$. By the $6$-term exact sequence applied to the exact sequence $$0\to \ker(\ev_1)\cap \bar{I}_{Z\cap \tilde{V}_i}\to \ker(\ev_1)\cap \bar{I}_{Z\cap \tilde{V}_{i+1}}\to \ker(\ev_1)\cap \bar{I}_{Z\cap (\tilde{V}_{i+1}\backslash \tilde{V}_{i})}\to 0,$$ it is enough to show that $\ker(\ev_1)\cap \bar{I}_{Z\cap (\tilde{V}_{i+1}\backslash \tilde{V}_{i})}$ has vanishing $K$-theory. The latter $C^*$-algebra is the kernel of the evaluation at $1$-map $\bar{I}_{Z\cap (\tilde{V}_{i+1}\backslash \tilde{V}_{i})}\to I_{Z\cap (\tilde{V}_{i+1}\backslash \tilde{V}_{i})}$.

\paragraph{Generalisation.} Let $X$ be a locally compact Hausdorff space. Suppose that $Z\subseteq \mathfrak{g}^*\times X$ is a locally closed subset which is $\R_+^\times$-invariant and $Ad^*(G)$-invariant, where both act trivially on $X$. By identifying the spectrum of $C^*G\otimes C_0(X)$ with $(\mathfrak{g}^*/Ad^*(G))\times X$, we get a short exact sequence $$0\to I_Z\otimes C_0(]0,1])\to \bar{I}_Z\to C_0(Z)\to 0,$$ where $I_Z$ and $\bar{I}_Z$ are the subquotients of $C^*G\otimes C_0(X)$ and $C^*\overline{G}\otimes C_0(X)$ which correspond to $Z$ and $Z\times [0,1]$ respectively. It follows that we have a map $$\mu_Z:K^*(Z)\to K_*\left(I_Z\right)$$
\begin{theorem}\label{thm 2}The map $\mu_Z$ is an isomorphism.
\end{theorem}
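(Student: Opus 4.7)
The plan is to replay the proof of Theorem \ref{thm 1} verbatim, with every $C^*$-algebra in sight tensored by $C_0(X)$ and every subset of $\mathfrak{g}^*$ crossed with $X$. Concretely, I would start again from the Pedersen filtration $0=\cJ_0\subsetneq \cJ_1\subsetneq\cdots \subsetneq\cJ_n=C^*G$ with $\cJ_i=I_{\tilde V_i}$ for $\R_+^\times$- and $Ad^*(G)$-invariant open sets $\tilde V_i\subseteq \mathfrak{g}^*$. Tensoring this sequence with $C_0(X)$ gives a filtration of $C^*G\otimes C_0(X)$ whose associated locally closed subsets of the spectrum $(\mathfrak{g}^*/Ad^*(G))\times X$ are precisely $V_i\times X$. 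Since $Z$ is $\R_+^\times$- and $Ad^*(G)$-invariant with these groups acting trivially on $X$, intersecting with $Z$ produces an ascending filtration
$$0\subseteq \bar I_{Z\cap (\tilde V_1\times X)}\subseteq \cdots \subseteq \bar I_{Z\cap (\tilde V_n\times X)}=\bar I_Z,$$
and each level fits into a short exact sequence $0\to I_{(\cdot)}\otimes C_0(]0,1])\to \bar I_{(\cdot)}\to C_0(\cdot)\to 0$ of the shape discussed before the theorem.

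Then by the functoriality of $\mu$ recalled in the Excision map paragraph, the six-term exact sequences in topological and operator $K$-theory, and the five-lemma, the problem reduces, by induction on $i$, to showing that for each $i\in\{1,\dots,n\}$ the map
$$\mu_i':K^{*}\!\bigl(Z\cap ((\tilde V_i\setminus \tilde V_{i-1})\times X)\bigr)\longrightarrow K_{*}\!\bigl(I_{Z\cap ((\tilde V_i\setminus \tilde V_{i-1})\times X)}\bigr)$$
is an isomorphism. By \cite[Lemma 1.6.1]{PedersenGeometricQuantNilpotent} there is a diffeomorphism $\tilde V_i\setminus \tilde V_{i-1}\cong (V_i\setminus V_{i-1})\times \R^{2d}$ under which $Ad^*(G)$-invariant subsets correspond to subsets of the form $S\times \R^{2d}$. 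Crossing with $X$ and using the Morita equivalence from \cite[Theorem 4.11]{FourierTransformNilpotentGroups}, one gets
$$I_{Z\cap ((\tilde V_i\setminus \tilde V_{i-1})\times X)}\simeq C_0\bigl(((Z/Ad^*(G))\cap (V_i\setminus V_{i-1}))\times X\bigr)\otimes \cK(L^2\R^d),$$
so that both sides of $\mu_i'$ are abstractly identified via Bott periodicity.

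Finally, to see that $\mu_i'$ actually implements this Bott periodicity, I would invoke exactly the same citation as in the end of the proof of Theorem \ref{thm 1}, namely \cite[Lemma 6 on Page 109]{ConnesBook}, applied to the fibered family of Lie algebras $(\mathfrak{g}_t)_{t\in[0,1]}$ with the extra parameter space $X$; the $X$-direction plays no active role since $C_0(X)$ commutes with everything and the Lie group acts trivially on it.

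The argument is mostly bookkeeping and the main potential obstacle is verifying that all the ingredients of the proof of Theorem \ref{thm 1} tensor compatibly with $C_0(X)$: the Pedersen stratification of the spectrum, the identification of each subquotient with $C_0(\text{base})\otimes \cK$, the functoriality square for $\mu$, and the identification of the boundary maps with Bott periodicity. Each of these is however entirely passive in the $X$-direction because $C_0(X)$ is a commutative central factor and $G$, $\R_+^\times$ act trivially on $X$, so no new analytic input is required beyond what already appears in Theorem \ref{thm 1}.
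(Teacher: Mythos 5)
Your proposal is correct and matches the paper's treatment: the paper simply states that the proof of Theorem \ref{thm 2} is the same as that of Theorem \ref{thm 1}, and your tensoring-with-$C_0(X)$ replay of the Pedersen stratification argument is exactly that intended adaptation, with the $X$-direction playing a passive role.
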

The proof of Theorem \ref{thm 2} is the same as that of Theorem \ref{thm 1}.
\begin{rem}The construction of $\mu_Z$ can be improved to give an element of Kasparov's \cite{KasparovInvent} $\KK$-theory $\mu_Z\in \KK^0(C_0(Z),I_Z)$. Our proof shows that $\mu_Z$ is invertible.
\end{rem}
\paragraph{Application to the characteristic set.}Let $\cF^i$ be modules as in Section \ref{sec:setting}, $X_i^j$ a finite number of generators of $\cF^i$. Let $\mathfrak{g}$ be the free graded nilpotent Lie algebra of step $N$ with a generator $\tilde{X}_i^j$ of degree $i$ for each $i,j$, $G$ the simply connected Lie group integrating $\mathfrak{g}$. For each $x\in M$, one has a Lie group homeomorphism \begin{align}\label{eqn:G}
 G\xrightarrow{\phi_x} \Gr( \cF)_x,\quad \tilde{X}_i^j\to [X_i^j]_x\in \frac{\cF^i}{\cF^{i-1}+I_x\cF^i}.
\end{align}
Let $$\phi^*:\gr(\cF)^*\to \mathfrak{g}^*\times M, \quad \phi^*(\xi,x)=(\phi_x^*(\xi),x).$$ The map $\phi$ is a closed embedding when $\gr(\cF)^*$ is equipped with the subspace topology from $\aF^*$ defined in Section \ref{sec:setting}. The set $\phi^*(\cT^*\cF)$ is closed because $\cT^*\cF$ is closed. It is also $\R_+^\times$ invariant and $Ad^*(G)$-invariant by \cite[Proposition 1.11]{MohsenMaxHypo}. We define $$C^*\cT\cF=I_{\phi^*(\cT^*\cF)}$$ which is a quotient of $C^*G\otimes C(M)$. It is clear that $C^*\cT\cF$ is fibered over $M$ with fiber at $x\in M$ equal to $I_{\cT^*\cF_x}$. It also doesn't depend on the choice of generators $X_i^j$, because for any other set of generators one could take the union of the two sets and form the same construction. Finally by Theorem \ref{thm 2}, one has an isomorphism $$\mu_{\cT^*\cF}:K^*(\cT^*\cF)\to K_*(C^*\cT\cF).$$

\paragraph{Principal symbol as multiplier.}Recall that a left invariant differential operator on $G$ gives an unbounded multiplier of $C^*G$. Let $D\in \Diff^k_{\cF}(M,E,F)$. We write $D$ as a sum of monomials as in \eqref{eqn:diff k} but we only use vector fields $X_i^j$. So the monomials have the form $\nabla_{X_{a_1}^{b_1}}\cdots \nabla_{X_{a_s}^{b_s}}\phi$ with $\sum_{l=1}^s a_i\leq s$. For $x\in M$, we define an unbounded multiplier $ E_x\otimes C^*G\to F_x\otimes C^*G$ by taking the sum of $\tilde{X}_{a_1}^{b_1}\cdots \tilde{X}_{a_s}^{b_s}\otimes \phi_x$ over the monomials with $\sum_{l=1}^s a_i=s$. These multipliers as $x$ varies form a multiplier of $C^*G\otimes C(M)$. This multiplier depends on the choice of the presentation of $D$ as sum of monimals as well as the choice of the generators. Nevertheless any multiplier of $C^*G\otimes C(M)$ descends to any quotient and thus it descends to $I_{\phi^*(\cT^*\cF)}$, and gives a multiplier  \begin{equation}
\sigma^k(D):\dom(\sigma^k(D))\subseteq \Gamma(E)\otimes_{C(M)} C^*\cT\cF\to \Gamma(F)\otimes_{C(M)} C^*\cT\cF
\end{equation}
which satisfies \eqref{eqn:princip rep eqn multi}. By \eqref{eqn:princip rep eqn multi} and Theorem \ref{thm:charset}, the multiplier $\sigma^k(D)$ doesn't depend on the choice of the generators $X_i^j$ nor on the choice of the presentation of $D$ as sum of monomials.
%\begin{theorem}\label{thm:princip symbol Fredholm multi}
%If $D$ is $*$-maximally hypoelliptic, then $\sigma^k(D)$ is a Fredholm multiplier. This means that it is regular in the sense of Baaj-Julg \cite{BaajJulg} and its bounded transform is Fredholm.
%\end{theorem}
%Theorem \ref{thm:princip symbol Fredholm multi} is a consequence \cite[Theorem 4.44]{MohsenMaxHypo}. 
\section{Examples of index computations}\label{sec:Examp}
The goal of this section is to compute the analytic index of some $*$-maximally hypoelliptic operators on closed manifolds fibered over $S^1$. To this end, we will first consider the model singularity for our examples. 
\paragraph{Model singularity.}Let $k,n\in \N$. Consider the filtration on $\R\times \R^n$ of depth $k+1$ given by $$\cF^i=\langle \partial_x,x^{k+1-i}\partial_{y_1},\cdots, x^{k+1-i}\partial_{y_n}\rangle.$$ The group $\Gr(\cF)_{(x,y)}$ has dimension $n+1$ with commutative structure if $x\neq 0$. If $x=0$, then $\gr(\cF)_{(0,y)}$ has dimension $1+n(k+1)$ with generators $[\partial_x]$ in degree $1$ and $[x^{i}\partial_{y_l}]$ in degree $k+1-i$ for $i\in \{0,\cdots,k\},l\in \{1,\cdots,n\}$ and the relations $[[\partial_x],[x^{i}\partial_{y_l}]]=i[x^{i-1}\partial_{y_l}]$. We denote by $\eta \in \gr(\cD)^*_{(0,y)}$ the dual of $[\partial_x]$ and by $\xi_{i,l}$ the dual of $[x^i\partial_y]$. By the definition of the characteristic set $\cT^*\cF_{(0,y)}$, it is the set of elements $(\eta,\xi)\in\gr(\cF)_{(0,y)}$ such that there exists a sequence $t_{\alpha}\in ]0,1],(x_\alpha,y_\alpha)\in \R\times \R^n$ and $(f_\alpha,g_\alpha)\in T^*_{(x_\alpha,y_\alpha)} (\R\times \R^n)$ such that $$(t_\alpha,x_\alpha,y_\alpha)\to (0,0,y),\quad t_\alpha f_\alpha\to \eta,\quad t_{\alpha}^{k+1-i}x_\alpha^{i}g_{\alpha,l}\to \xi_{i,l},\quad \forall \ i\in \{0,\cdots,k\},l\in \{1,\cdots,n\}$$ where $g_{\alpha,l}$ is the $l$-th component of $g_\alpha$. Hence it consists of elements $(\eta,\xi)$ such that $\eta\in \R$ is arbitrary and $\xi$ is either of the form $\xi_{i,l}=a^ib_l$ for some $a\in \R$ and $b\in \R^n$ or of the form $\xi_{i,l}=0$ if $i<k$ and $\xi_{k,l}=b_l$ for some $b\in \R^n$. So topologically the characteristic set is equal to $$\cT^*\cF_{(0,y)}\simeq \R\times \big(L\times \R_+/L\times \{0\}\big),$$ where $$L=S^{n-1}\times [-\infty,+\infty]/\sim$$ is the mapping torus where we identify $(x,+\infty)$ with $((-1)^kx,-\infty)$ for $x\in S^{n-1}$ and the quotient by $L\times \{0\}$ means we identify $L\times \{0\}$ with a point. 
%It follows that \begin{align*} K^0(\cT^*\cF_{(0,y)})=\frac{K^0(M_f)}{\Z}&=\begin{cases}\Z&\text{if}\ kn=0\mod 2 \quad \text{or}\ n=1\\0&\text{if not}\end{cases}\\K^1(\cT^*\cF_{(0,y)})=K^1(M_f)&=\begin{cases}\Z\oplus \Z&\text{if}\ kn=0\mod 2\quad \text{or}\ n=1\\\Z\oplus \frac{\Z}{2\Z }&\text{if not}\end{cases}\end{align*}

One checks that the set of representations in $\cT^*\cF_{(0,y)}$ obtained using the orbit method are \begin{itemize}
\item  the $1$-dimensional representations parameterised by $\R^{n+1}$
\item the representations on $L^2\R$ parameterised by $\delta\in \R^n\backslash \{0\}$ given by $$\pi_{\delta}([\partial_x])=\partial_x,\quad \pi_{\delta}([x^{i}\partial_{y_l}])=\sqrt{-1}\delta_l x^i.$$
\end{itemize}
Hence we have a short exact sequence $$0\to C_0(\R^n\backslash\{0\})\otimes \cK(L^2\R)\to C^*\cT\cF_{(0,y)}\to C_0(\R^{n+1})\to 0.$$This exact sequence corresponds by naturality of $\mu$ to the exact sequence $$0\to C_0(\R^3\times S^{n-1})\to C_0(\cT^*\cF_{(0,y)})\to C_0(\R^{n+1})\to 0.$$
\paragraph{Passage to manifolds.}Let $M$ be a closed $n+1$-dimensional manifold, $p:M\to S^1$ a fiber bundle, $f:S^1\to \R$ a smooth function which is not flat at any point in $f^{-1}(0)$. We denote by $k(x)$ the order of vanishing of $x\in f^{-1}(0)$, $k=\max_{x\in f^{-1}(0)}k(x)$, $f_k^{-1}(0)$ the set of $x\in f^{-1}(0)$ with $k(x)=k$. We fix $X\in \cX(M)$ a vector field such that $dp(X)=\partial_x$.

 We consider the filtration on $M$ of depth $k+1$ defined inductively by \begin{equation}\label{eqn:filtexampmanif}
   \cF^1=\langle X,(f\circ p)Y:Y\in \Gamma(\ker(dp))\rangle,\quad\cF^{i+1}=\cF^{i}+[\cF^i,\cF^1].
\end{equation}The module $\cF^1$ is locally free, so $\cF^1=\Gamma(E)$ for some vector bundle $E\to M$. The vector bundle $E$ is topologically isomorphic to $TM$. The isomorphism comes from the isomorphism \begin{align*}
 \cX(M)\to \cF^1=\Gamma(E),\quad gX+hY\mapsto gX+(f\circ p)hY,\quad g,h\in C^\infty(M),Y\in \ker(dp).
\end{align*} We equip $TM$ with a Riemannian metric such that the decomposition $TM=\R X\oplus \ker(dp)$ is orthogonal. We equip $T^*M$ with the dual metric. For $x\in f^{-1}(0)$, let $S_x$ be the sphere bundle of $T^*p^{-1}(x)$, $L_x$ the torus $S_x\times [-\infty,+\infty]/\sim$ with the identification $(y,+\infty)\sim ((-1)^{k(x)}y,-\infty)$. By the previous discussion, we can describe the characteristic set $\cT^*\cF$ as the gluing of $T^*M$ and $\bigsqcup_{x\in f^{-1}(0)}\R\times \big(L_x\times\R_+/L_x\times\{0\})$, where we identify for $x\in f^{-1}(0)$, $T^*M_{|p^{-1}(x)}=\R\oplus T^*p^{-1}(x)$ with $\R\times ((S_x\times \{0\}\times \R_+)/S_x\times\{0\})$. As a set $\cT^*\cF$ is equal to the disjoint union of $T^*M$ with $\R^3\times S_x$ for each $x\in f^{-1}(0)$. Since $T^*M$ is a closed subset of $\cT^*\cF$, we have an exact sequence \begin{align}\label{exactseq1}
0\to \bigoplus_{x\in f^{-1}(0)}C_0(\R^3\times S_x)\to C_0(\cT^*\cF)\to C_0(T^*M)\to 0
\end{align}

By the previous discussion, we also have a short exact sequence \begin{align}\label{exactseq2}
 0\to \bigoplus_{x\in f^{-1}(0)}C_0(\ring{T}^*p^{-1}(x)) \otimes \cK(L^2\R)\to C^*\cT\cF\to C_0(T^*M)\to 0,
\end{align} where $\ring{T}^*p^{-1}(x)$ is the cotangent bundle of the fiber $p^{-1}(x)$ with the zero section removed.

The short exact sequences \eqref{exactseq1} and \eqref{exactseq2} correspond to each other in $K$-theory using $\mu_{\cT^*\cF}$. One can create a similar exact sequence for $T^*M\times ]0,1]\sqcup \cT^*\cF\times\{0\}\subseteq \aF^*$ defined in \eqref{eqn:af*}, which corresponds to the above two sequence by $\Ex$ map. One then obtains the following. 
 \begin{prop}\label{prop:index comp} Let $D$ be a $*$-maximally hypoelliptic operator with respect to $\cF^\bullet$ of order $s\in \N$ whose principal symbol $[\sigma^s(D)]\in K_0(C^*\cT\cF)$ comes from an element in $$\alpha=\sum_{x\in f^{-1}(0)}\alpha_x\in K_0\left(\bigoplus_{x\in f^{-1}(0)}C_0(\ring{T}^*p^{-1}(x)) \otimes \cK(L^2\R)\right)=\bigoplus_{x\in f^{-1}(0)}K_0(C_0(\ring{T}^*p^{-1}(x)) \otimes \cK(L^2\R)).$$ Then $$\Ind_a(D)=\sum_{x \in f^{-1}(0)}\phi_x(\alpha_x),$$ where $\phi_x: K_0(C_0(\ring{T}^*p^{-1}(x)) \otimes \cK(L^2\R))\to\Z $ is the composition $$K_0(C_0(\ring{T}^*p^{-1}(x)) \otimes \cK(L^2\R))\xrightarrow{Morita}K^0(\ring{T}^*p^{-1}(x) )\xrightarrow{K_*(\iota)}K^0(T^*p^{-1}(x))\xrightarrow{Ind_{AS}}\Z,$$ where $\iota:\ring{T}^*p^{-1}(x)\to T^*p^{-1}(x)$ is the inclusion.
\end{prop}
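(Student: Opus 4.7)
By Theorem \ref{main thm}, it suffices to evaluate $\Ind_{AS}\circ\Ex\circ\mu_{\cT^*\cF}^{-1}$ on $[\sigma^s(D)]$ and show it equals $\sum_x\phi_x(\alpha_x)$. The strategy is a diagram chase on the ladder formed by \eqref{exactseq1} and \eqref{exactseq2}, making crucial use of the naturality of $\mu$ (Section \ref{sec:variant}) and the proof of Theorem \ref{thm 1}, together with a local model-singularity computation for $\Ex$.

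By hypothesis $[\sigma^s(D)]$ lifts to $\alpha=\sum_x\alpha_x$ in $K_0$ of the ideal in \eqref{exactseq2}. Applying the naturality of $\mu$ to the ladder formed by \eqref{exactseq1} and \eqref{exactseq2} yields a commutative diagram of six-term exact sequences in which $\mu_{\cT^*\cF}^{-1}([\sigma^s(D)])$ lifts to an element $\beta=\sum_x\beta_x\in\bigoplus_x K^0(\R^3\times S_x)\subseteq K^0(\cT^*\cF)$ with $\mu_{\mathrm{ideal}}(\beta_x)=\alpha_x$, where $\mu_{\mathrm{ideal}}$ is the restriction of $\mu$ to the ideal stratum. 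By the proof of Theorem \ref{thm 1} (based on Lemma 6 on page 109 of \cite{ConnesBook}), $\mu_{\mathrm{ideal}}$ on each summand is the composition of Bott periodicity (of rank $2$, coming from the co-adjoint orbit of the Heisenberg-type representations on $L^2\R$) with Morita equivalence. Since both $K^0(\R^3\times S_x)$ and $K^0(\ring T^*p^{-1}(x))$ reduce to $K^1(S_x)$ by suspension, under this identification $\beta_x$ corresponds to the Morita transform of $\alpha_x$.

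Next I would compute $\Ex$ on $\beta_x$. Restricting $\aF^*$ to a tubular neighborhood of $p^{-1}(x)$, the filtration is exactly the model singularity of Section \ref{sec:Examp}, and $\Ex$ is given by an explicit adiabatic deformation. A direct K-theoretic computation in this model identifies $\Ex|_{K^0(\R^3\times S_x)}$ with the composition
\begin{equation*}
K^0(\R^3\times S_x)\xrightarrow{\mathrm{Bott}^{-1}} K^0(\ring T^*p^{-1}(x))\xrightarrow{\iota_*} K^0(T^*p^{-1}(x))\xrightarrow{j_!} K^0(T^*M),
\end{equation*}
where $j_!$ is the Gysin map of the embedding $j:p^{-1}(x)\hookrightarrow M$. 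Geometrically, the two Bott classes appearing (one from $\mu_{\mathrm{ideal}}$, one from the normal directions of $p^{-1}(x)\subset M$ in $\Ex$) cancel, leaving only the Gysin embedding. Combining with the Atiyah--Singer multiplicativity $\Ind_{AS}^{M}\circ j_!=\Ind_{AS}^{p^{-1}(x)}$ under embeddings yields $\Ind_{AS}(\Ex(\beta_x))=\phi_x(\alpha_x)$, and summing gives the claim.

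The main obstacle is the identification of $\Ex|_{K^0(\R^3\times S_x)}$ with $j_!\circ\iota_*\circ\mathrm{Bott}^{-1}$. This requires constructing the deformation $\aF^*$ over the tubular neighborhood explicitly, matching it with the standard deformation to the normal cone of $p^{-1}(x)\hookrightarrow M$, and tracking carefully the two Bott classes so that they cancel to produce exactly the geometric Gysin embedding.
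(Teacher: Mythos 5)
Your proposal is correct and follows essentially the same route as the paper: invoke Theorem \ref{main thm}, use naturality of $\mu$ and of the excision map with respect to the ladders built from \eqref{exactseq1}, \eqref{exactseq2} and the analogous sequence for $T^*M\times ]0,1]\sqcup \cT^*\cF\times\{0\}$ to reduce to the classes supported on the pieces $\R^3\times S_x$, and then do the model-singularity computation there, where $\mu$ on the ideal is Bott periodicity (in the two coadjoint-orbit directions) composed with Morita equivalence. The step you flag as the main obstacle --- identifying $\Ex$ on those classes with $j_!\circ\iota_*\circ\mathrm{Bott}^{-1}$ so that the two Bott classes cancel, and then using multiplicativity of $\Ind_{AS}$ under the embedding $p^{-1}(x)\hookrightarrow M$ --- is exactly what the paper leaves implicit in ``one then obtains the following'', so your Gysin-map argument is a legitimate fleshing-out of the same approach, with only the orientation bookkeeping of the two Bott classes (the deformation matches $(\eta,a)$ at $t=0$ with the conormal/normal pair $(f,u)$ at $t=1$) still to be verified.
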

\paragraph{Examples.}
Let $D,D':\Gamma(E)\to \Gamma(E)$ be differential operators of \textit{classical} order $1$ and $l\in \N$ respectively for some vector bundle $E\to M$ such that $2|l(k+1)$ and $D$ is longitudinal with respect to the foliation $\ker(dp)$ and $\sigma^1_{\text{classical}}(D,\xi)^2=-\norm{\xi}^2Id_E$ for $\xi\in \ker(dp)^*$. Here $\sigma^1_{classical}$ is the classical principal symbol. We fix any connection $\nabla$ on $E$. We consider the operator $$D''=(\nabla_X^2+((f\circ p) D)^2)^{\frac{l(k+1)}{2}}-D':\Gamma(E)\to \Gamma(E).$$
The operator $D''$ is of order $l(k+1)$ with respect to the filtration given by \eqref{eqn:filtexampmanif}. Let us compute its principal symbol. If $y\in M$, $\xi\in T^*_yM$ which gives a $1$-dimensional representation of $\Gr(\cF)_y$ by \eqref{exactseq1}, then \begin{equation}
 \sigma^{l(k+1)}(D'',y,\xi)=\sigma^{l(k+1)}((\nabla_X^2+((f\circ p) D)^2)^{\frac{l(k+1)}{2}},y,\xi)=\left(-\xi(X(y))^2-\norm{\xi_{|\ker(dp)}}^2\right)^{\frac{l(k+1)}{2}}.
\end{equation} The first equality follows because there is no term in $D'$ which is of order $l(k+1)$ and which is non zero when mapped by $\xi$. The injectivity $\sigma^{l(k+1)}(D'',y,\xi)$ if $\xi\neq 0$ is obvious. If $y\in p^{-1}(x)$ with $x\in f^{-1}(0)$, and $\xi\in \ring{T}^*p^{-1}(x)$ which corresponds to an infinite dimensional representation on $L^2\R$ by \eqref{exactseq1}, then by homogenity of the principal symbol with respect to graded dilations, it is enough to consider $\xi$ such that $\norm{\xi}=1$. There are two cases. It is convenient to denote by $c(x)=\frac{f^{(k(x))}(x)}{k!}$ for $x\in f^{-1}(0)$. \begin{itemize}
 \item Either $k(x)<k$. In this case none of the terms in $D'$ are of order $l(k+1)$ in a neighbourhood of $y$. Hence $$\sigma^{l(k+1)}(D'',y,\xi)=\left(\partial_z^2-c(x)^2z^{2k(x)}\right)^{\frac{l(k+1)}{2}}.$$ In this case injectivity is obvious, see \cite[Proposition 4.1]{MohsenMaxHypo}.
\item if $k(x)=k$, then the terms in $D'$ of order $l(k+1)$ are precisely the ones of order $l$ with respect to the classical order. It follows that $$\sigma^{l(k+1)}(D'',y,\xi)=\left(\partial_z^2-c(x)^2z^{2k}\right)^{\frac{l(k+1)}{2}}-\sigma^{l}_{classical}(D',y,\xi).$$
The operator $\left(\partial_z^2-c(x)^2z^{2k}\right)^{\frac{l(k+1)}{2}}$ acting on $L^2\R$ is diagonalizable. It follows that $D''$ is is $*$-maximally hypoelliptic if and only if 
$$\lambda-\sigma^l_{classical}(D',y,\xi)\in \GL(E_y),$$ for all $$\lambda\in \mathrm{spec}\left(\left(\partial_z^2-c(x)^2z^{2k}\right)^{\frac{l(k+1)}{2}}\right),\ y\in p^{-1}(f^{-1}_k(0)),\ \xi \in \ker(dp)_y,\ \norm{\xi}=1$$
\end{itemize}
By the computation above, $D''$ satisfies the hypothesis of Proposition \ref{prop:index comp}. So\begin{equation}\label{eqn:indexformulasec3Examp}
 \Ind_a(D'')=\sum_{x\in f^{-1}_k(0) }\sum_{\lambda} \Ind_{AS}([\lambda-\sigma^l_{classical}(D',y,\xi)]),
\end{equation} where \begin{itemize}
 \item $\sum_{\lambda} $ is the sum over the spectrum of $\left(\partial_z^2-c(x)^2z^{2k}\right)^{\frac{l(k+1)}{2}}$.
 \item $\Ind_{AS}([\lambda-\sigma^l_{classical}(D',y,\xi)])$ means the image of $[\lambda-\sigma^l_{classical}(D',y,\xi)]\in K^1(S_x)=K^0(\ring{T}^*p^{-1}(x))$ by $K^0(\ring{T}^*p^{-1}(x))\xrightarrow{K_*(\iota)}K^0(T^*p^{-1}(x))\xrightarrow{\Ind_{AS}}\Z$.
\end{itemize}
\begin{rem}\label{remendofsection}Consider the filtration of depth $k+2$ on $M$ \begin{equation*}
   \cF^1=\langle X\rangle,\cF^2=\langle X,(f\circ p)Y:Y\in \Gamma(\ker(dp))\rangle,\quad\cF^{i+1}=\cF^{i}+[\cF^i,\cF^1]+[\cF^{i-1},\cF^2].
\end{equation*}
The computations we did above work almost word for word the same for this filtration, one just replaces the formula for $D''$ with $$ (\nabla_X^4-((f\circ p) D)^2)^{\frac{l(k+2)}{4}}-D'$$ and $D,D'$ as above.
\end{rem}
\paragraph{Return to $S^1\times S^1$.} From \eqref{eqn:indexformulasec3Examp}, one immediately deduces \eqref{eqn:index sec 1}. We end this section two further examples. \begin{itemize}
\item for any $k\in \N$, consider the operator $$D=(\partial_x^2+(f(x)\partial_y)^2)^{k+1}+g(x,y)\partial_y^2.$$ The operator is $*$-maximally hypoelliptic if and only if $$g(x,y)\notin \mathrm{spec}\left(\left(\partial_z^2-c(x)^2z^{2k}\right)^{k+1}\right),\quad \forall (x,y)\in f^{-1}_k(0)\times S^1.$$ Its analytic index always vanishes by \eqref{eqn:indexformulasec3Examp}.
\item Using remark \ref{remendofsection}, supposing $4|k+2$, consider the operator $$D=(\partial_x^4-(f(x)\partial_y)^2)^{\frac{k+2}{4}}+g(x,y) \sqrt{-1}\partial_y.$$ The operator is $*$-maximally hypoelliptic if and only if $$g(x,y)\notin \pm\mathrm{spec}\left(\left(\partial_z^4+c(x)^2z^{2k}\right)^{\frac{k+2}{4}}\right),\quad \forall (x,y)\in f^{-1}_k(0)\times S^1.$$ One then has \begin{equation*}
     \Ind_a(D)=\sum_{x\in f^{-1}_k(0) }\sum_{\lambda}w(\lambda-g(x,\cdot))-w(\lambda + g(x,\cdot)),
\end{equation*} 
where the sum is over the spectrum.
\end{itemize}
\section{Proof of the index formula}\label{sec:index theorem}
In this section, we prove Theorems \ref{main thm} and \ref{second_main_thm}.
\begin{proof}[Proof of Theorem \ref{main thm}]
 The idea is to construct a $C^*$-algebra fibered over $[0,1]^2$ which connects the index for $*$-maximally hypoelliptic operators with Atiyah-Singer index. The $C^*$-algebra is obtained as the $C^*$-algebra of the blowup of a singular foliation on $M\times [0,1]^2$, see \cite{MohsenBlowup}. By a singular foliation on a compact manifold, we mean a finitely generated module of vector fields which is closed under Lie brackets. We define a singular foliation on $M\times [0,1]^2$ which is defined as the set of finite sums of vector fields of the form $$t^is f(x,t,s)X(x),$$ where $X\in \cF^i$ and $f\in C^\infty(M\times [0,1]^2)$. It is denoted by $\taD$. This is a singular foliation by \eqref{eqn:BracketF}. The holonomy groupoid $\cH(\taD)\rightrightarrows M\times [0,1]^2$ of $\taD$ defined in \cite{AS1} is fibered over $[0,1]^2$ with fiber over $(t,s)\in [0,1]^2$ equal to \begin{itemize}
\item the pair groupoid $M\times M$ if $(t,s)\in ]0,1]^2$, whose $C^*$-algebra is $\cK(L^2M)$
\item the bundle of abelian groups $TM$ if $(t,s)\in ]0,1]\times \{0\}$, whose $C^*$-algebra is $C_0(T^*M)$
\item the bundle of osculating groups $\Gr(\cF):=\sqcup_{x\in M}\Gr(\cF)_x$ if $(t,s)\in \{0\}\times ]0,1]$, whose $C^*$-algebra is $C^*\Gr(\cF)$ is equal to $I_{\phi^*(\gr(\cF)^*)}$, where $\phi$ is the map \eqref{eqn:G}. This follows from \cite[Proposition 2.18]{MohsenMaxHypo}.
\item the bundle of abelian groups $\bigsqcup_{x\in M}\gr(\cF)_x$ if $(t,s)=(0,0)$. By \cite[Theorem 3.7]{AS3}, the $C^*$-algebra of the fiber is equal to $C_0(\gr(\cF)^*)$, where $\gr(\cF)^*\subseteq \aF^*$ inherits the subspace topology defined in Section \ref{sec:setting}.
\end{itemize} 
We won't use the maximal $C^*$-algebra $C^*\taD$ defined in \cite{AS1} as it doesn't have the fibers we need. Instead we will use the $C^*$-algebra of the blowup $C^*_z\taD$ defined in \cite[Section 3]{MohsenBlowup}. By \cite[Theorem 3.7]{MohsenBlowup}, $C^*_z\taD$ can be defined as a quotient of $C^*\taD$ by the formula $$C^*_z\taD:=\frac{C^*\taD}{J},\quad J=\{a\in C^*\taD:a_{(t,s)}=0,\quad \forall (t,s)\in ]0,1]^2\}.$$
By \cite[Theorem 3.7 and Example 3.8]{MohsenBlowup}, we deduce that the fibers of $C^*_z\taD$ are given by\begin{itemize}
\item $\cK(L^2M)$  if $(t,s)\in ]0,1]^2$.
\item $C_0(T^*M)$ if $(t,s)\in ]0,1]\times \{0\}$.
\item $C^*\cT\cF$ if $(t,s)\in \{0\}\times ]0,1]$.
\item $C_0(\cT^*\cF)$ if $(t,s)=(0,0)$.
\end{itemize} 
The restriction to each side of the square $[0,1]^2$ gives a short exact sequence of the form \eqref{eqn:index map seq}, and hence an excision map $\mu$. The restriction to the side \begin{itemize}
\item $\{0\}\times [0,1]$ gives the short exact sequence $$0\to C^*\cT\cF\otimes C_0(]0,1])\to C^*_z\taD_{|\{0\}\times [0,1]}\to C_0(\cT^*\cF) \to 0$$ which gives the map $$\mu_{\cT^*\cF}:K^*(\cT^*\cF)\to K_*(C^*\cT\cF).$$
\item $\{1\}\times [0,1]$ gives the short exact sequence $$0\to \cK(L^2M)\otimes C_0(]0,1])\to C^*_z\taD_{|\{1\}\times [0,1]}\to C_0(T^*M) \to 0$$ which by \cite[Lemma 6 on Page 109]{ConnesBook} gives the Atiyah-Singer index map $$\Ind_{AS}:K^0(T^*M)\to \Z.$$
\item $[0,1]\times \{0\}$ gives the short exact sequence $$0\to C_0(T^*M)\otimes C_0(]0,1])\to C_0(T^*M\times ]0,1]\sqcup \cT^*\cF\times \{0\}) \to C_0(\cT^*\cF)\to 0$$ which gives the Excision map  $$\Ex:K^0(\cT^*\cF)\to K^0(T^*M).$$
\item $[0,1]\times \{1\}$ gives the short exact sequence $$0\to\cK(L^2M)\otimes C_0(]0,1])\to C^*_z\taD_{|[0,1]\times \{1\}}\to C^*\cT\cF\to 0$$ which gives a map $$\Ind_{\cF}:K_*(C^*\cT\cF)\to \Z.$$ \begin{lem}If $D\in \Diff^k_{\cF}(M,E,F)$ is $*$-maximally hypoelliptic, then $\Ind_{\cF}([\sigma^k(D)])=\Ind_a(D)$.
\end{lem}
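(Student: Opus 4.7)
The plan is to lift $\sigma^k(D)$ to a regular Fredholm multiplier $\widetilde D$ of the one-parameter $C^*$-algebra $B:=C^*_z\taD_{|[0,1]\times\{1\}}$, and then read off the claimed equality from the functoriality of the excision construction $\mu$ from Section \ref{sec:variant} applied to the short exact sequence
$$0\to \cK(L^2M)\otimes C_0(]0,1])\to B\to C^*\cT\cF\to 0.$$
Since this connecting map sends the class of a regular Fredholm multiplier on $C^*\cT\cF$ to the class of any compatible regular Fredholm lift evaluated at $t=1$, the identity $\Ind_{\cF}([\sigma^k(D)])=\Ind_a(D)$ will follow once we exhibit such a lift whose value at $t=1$ represents $[D]\in K_0(\cK(L^2M))=\Z$.

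For the construction of $\widetilde D$, I would fix a presentation $D=\sum \nabla_{X_{a_1}^{b_1}}\cdots \nabla_{X_{a_s}^{b_s}}\phi$ using generators $X_i^j$ of $\cF^\bullet$ as in Section \ref{sec:variant}, and lift each generator $X_i^j\in \cF^i$ to the $\taD$-section whose value at $(x,t,1)$ is $t^i X_i^j(x)$. Summing only over monomials with $\sum a_\ell = k$ yields an unbounded multiplier $\widetilde D$ of $B$. By the very definition of the principal symbol multiplier in Section \ref{sec:variant}, its evaluation at $t=0$ is exactly $\sigma^k(D)$, while for $t\in\,]0,1]$ the dilation isomorphism $C^*_z\taD_{(t,1)}\simeq \cK(L^2M)$ conjugates $\widetilde D_t$ to $t^k$ times the top-degree part of $D$ acting on $L^2M$. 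The difference with $D$ itself consists of lower-order terms, which are relatively compact by Theorem \ref{thm:max hypo}, so $[\widetilde D_1]=[D]$ in $K_0(\cK(L^2M))$.

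The heart of the argument, and the expected main obstacle, is to verify that $\widetilde D$ is regular in the sense of Baaj--Julg and Fredholm over $B$. Theorem \ref{thm:max hypo} applied to both $D$ and $D^*$ provides parametrices modulo compacts at $t=1$, and invertibility of the principal symbol on the characteristic set gives a parametrix at $t=0$ in the pseudodifferential calculus of \cite{MohsenMaxHypo}. The task is to assemble these fiberwise parametrices into a single multiplier of $B$ inverting $\widetilde D$ modulo the compacts of $B$, using the pseudodifferential calculus of the holonomy groupoid $\cH(\taD)$ that underlies the construction of $C^*_z\taD$; this requires checking that the family of parametrices depends continuously on $t$ and restricts correctly at $t=0$, i.e.\ uniform Fredholmness in the blowup groupoid calculus.

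Once $\widetilde D$ is known to be a regular Fredholm multiplier of $B$, functoriality of $\mu$ gives $\Ind_{\cF}([\sigma^k(D)])=[\widetilde D_1]=[D]$ in $K_0(\cK(L^2M))=\Z$, and this integer is the Fredholm index of $D:H^s_{\cF}(M,E)\to H^{s-k}_{\cF}(M,F)$, which equals $\Ind_a(D)$ by the discussion following Corollary \ref{thm:max hypo_cor}. This would complete the proof.
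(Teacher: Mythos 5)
Your overall strategy is the paper's: lift the symbol to an unbounded Fredholm multiplier of the edge algebra $B=C^*_z\taD_{|[0,1]\times\{1\}}$ interpolating between $\sigma^k(D)$ at $t=0$ and a representative of the class of $D$ at $t=1$, and then read the equality off the definition of $\Ind_{\cF}$ as the excision map of $0\to\cK(L^2M)\otimes C_0(]0,1])\to B\to C^*\cT\cF\to 0$. Two remarks on the details: you lift only the top-degree monomials, so at $t=1$ you obtain the principal part of $D$ rather than $D$ itself and must add a compact-perturbation step; that step is correct (the lower-order terms map $H^s_{\cF}\to H^{s-k+1}_{\cF}$ and the inclusion $H^{s-k+1}_{\cF}\hookrightarrow H^{s-k}_{\cF}$ is compact), but it is a property of the Sobolev scale, not a consequence of Theorem \ref{thm:max hypo} as you assert.

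The genuine gap is exactly where you flag it: you never establish that $\widetilde D$ is a regular Fredholm multiplier of $B$. Fiberwise parametrices (from maximal hypoellipticity at $t=1$ and invertibility of the symbol on the characteristic set at $t=0$) do not by themselves give invertibility in $M(B)/B$; one needs a single parametrix living over the whole edge, i.e.\ a uniform construction in a pseudodifferential calculus adapted to the deformation, with the correct continuity in $t$ and the correct restriction at $t=0$. In the paper this is not re-proved but imported: the multiplier $\Theta(D)$ of $C^*_z\taD_{|[0,1]\times\{1\}}$, acting by $t^kD$ for $t\neq0$ and by $\sigma^k(D)$ at $t=0$, was already constructed in \cite[Section 2.4]{MohsenMaxHypo}, and its Fredholmness follows from the parametrix construction of \cite[Sections 3.10 and 3.11]{MohsenMaxHypo}. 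So your outline is the right one, but as written the step you yourself call the heart of the argument remains an unproved assertion; to close it you should either invoke those constructions or actually carry out the uniform parametrix construction in the calculus of $\cH(\taD)$, which is substantial analytic work rather than a routine verification.
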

\begin{proof}
In \cite[Section 2.4]{MohsenMaxHypo}, we constructed an unbounded multiplier $\Theta(D)$ of the $C^*$-algebra $C^*_z\taD_{|[0,1]\times \{1\}}$ which when restricted to $t\neq 0$ it acts on $\cK(L^2M)$ by $t^kD$ and when restricted to $t=0$ it acts on $C^*\cT\cF$ by $\sigma^k(D)$. This unbounded multiplier is Fredholm. This follows from the construction of a parametrix in \cite[Section 3.10 and 3.11]{MohsenMaxHypo}. It follows that $[\Theta(D)]\in K_0(C^*_z\taD_{|[0,1]\times \{1\})}$ is well defined. The definition of the map $\Ind_{\cF}$ implies the result, because $K(\ev_0)([\Theta(D)])=[\sigma^k(D)]$ and $K(\ev_1)([\Theta(D)])=\Ind_a(D)$.
\end{proof}
\end{itemize}
The existence of the $C^*$-algebra $C^*_z\taD$ on the square $[0,1]^2$ implies that\begin{equation}\label{eqn:comm A}
 \Ind_{\cF} \circ \mu_{\cT^*\cF}=\Ind_{AS}\circ \Ex.
\end{equation} To see this \begin{lem}The map $K(\ev_{(0,0)}):K_*(C^*_z\taD)\to K^*(\cT^*\cF)$ is an isomorphism, where $\ev_{(0,0)}$ is the evaluation map at $(0,0)$.
\end{lem}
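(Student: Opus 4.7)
The plan is to factor the evaluation map at $(0,0)$ through the evaluation at the edge $\{0\}\times[0,1]$, and to show that each of the two resulting quotient maps is a $K$-theory isomorphism by exhibiting its kernel as a contractible $C^*$-algebra. Concretely, I would write
\[
C^*_z\taD \xrightarrow{\ev_{\{0\}\times[0,1]}} C^*_z\taD_{|\{0\}\times[0,1]} \xrightarrow{\ev_{(0,0)}} C_0(\cT^*\cF)
\]
and treat the two arrows separately.

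For the second arrow, the kernel is the ideal of sections of the bundle $C^*_z\taD_{|\{0\}\times[0,1]}$ over $[0,1]$ that vanish at $s=0$. From the list of fibers established just above the lemma, this bundle has constant fiber $C^*\cT\cF$ over $]0,1]$ and fiber $C_0(\cT^*\cF)$ at $s=0$; hence its kernel equals $C^*\cT\cF \otimes C_0(]0,1])$. Since $C_0(]0,1])$ is contractible, $K_*$ of the kernel vanishes, and the six-term exact sequence gives a $K$-theory isomorphism. (This is essentially the construction of $\mu_{\cT^*\cF}$ from Section \ref{sec:variant}, applied internally.)

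For the first arrow, the kernel is the ideal $J := C^*_z\taD_{|]0,1]\times[0,1]}$. The key observation is that the generators $t^is\,f(x,t,s)X$ of $\taD$ are purely longitudinal in $M$, and for $t>0$ the scalar $t^i$ is a positive smooth function. Hence on $M\times]0,1]\times[0,1]$ the singular foliation $\taD$ is, up to this invertible rescaling, the pull-back under the projection $M\times]0,1]\times[0,1]\to M\times[0,1]$ of the singular foliation on $M\times[0,1]$ generated by $\{sfX : X\in\cX(M),\,f\in C^\infty\}$. Combined with the functoriality of the holonomy groupoid from \cite{AS1} and of the blowup $C^*$-algebra from \cite{MohsenBlowup}, this yields a $C_0(]0,1])$-linear isomorphism
\[
J \;\cong\; C_0(]0,1]) \otimes C^*_z\taD_{|\{1\}\times[0,1]},
\]
where $C^*_z\taD_{|\{1\}\times[0,1]}$ is Connes's tangent groupoid $C^*$-algebra of $M$. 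As $C_0(]0,1])$ is contractible, $K_*(J)=0$, and the six-term exact sequence delivers the desired isomorphism.

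The main obstacle is making the trivialization $J\cong C_0(]0,1])\otimes C^*_z\taD_{|\{1\}\times[0,1]}$ fully rigorous: one must check that the zoom action in the $t$-direction on the blowup groupoid descends to a $C_0(]0,1])$-linear $C^*$-algebra isomorphism. Once this is granted, composing the two $K$-theory isomorphisms gives the claim.
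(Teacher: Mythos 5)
Your proposal is correct and takes essentially the same approach as the paper: both arguments reduce the claim to the contractibility in $K$-theory of the same two pieces, namely $C^*_z\taD_{|]0,1]\times [0,1]}\simeq C^*_z\taD_{|\{1\}\times [0,1]}\otimes C_0(]0,1])$ and $C^*\cT\cF\otimes C_0(]0,1])$, followed by the six-term exact sequence. The only cosmetic difference is that you factor $\ev_{(0,0)}$ through the restriction to the edge $\{0\}\times[0,1]$, whereas the paper packages the same two contractible algebras into a single extension of $\ker(\ev_{(0,0)})$.
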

\begin{proof}
The kernel of $\ev_{(0,0)}$ fits into the short exact sequences \begin{align*}
 0\to C^*_z\taD_{|]0,1]\times [0,1]}\to \ker(\ev_{(0,0)}) \to C^*_z\taD_{|\{0\}\times ]0,1]}\to 0\\
\end{align*}
Since $C^*_z\taD_{|\{0\}\times ]0,1]}\simeq C^*_z\taD_{|(0,1)}\otimes C_0(]0,1])$ and  $C^*_z\taD_{|]0,1]\times [0,1]}\simeq C^*_z\taD_{|\{1\}\times [0,1]}\otimes C_0(]0,1])$, the result follows from $6$-term exact sequence and the fact that $C_0(]0,1])$ is contractible.
\end{proof}
The two sides of \eqref{eqn:comm A} applied to an element of the form $K(\ev_{(0,0)})(x)$ is equal to $K(\ev_{(1,1)})(x)$ for $x\in K_*(C^*_z\taD)$. By surjectivity of $K(\ev_{(0,0)})$ the result follows. Since $\mu_{\cT^*\cF}$ is an isomorphism, we deduce that
\begin{equation}
 \Ind_{\cF}=\Ind_{AS}\circ \Ex\circ  \mu_{\cT^*\cF}^{-1}.
\end{equation}
This finishes the proof of Theorem \ref{main thm}.
\end{proof}
\begin{proof}[Proof of Theorem \ref{second_main_thm}]In \cite[Section 2.2]{MohsenBlowup}, a blowup space of any singular foliation is defined. In particular for $\taD$, we get a blowup space $\blup(\taD)_{(x,0,1)}$ which is defined as a subspace of the Grassmannian manifold of subspaces of $\gr(\cF)_x$ of codimension $\dim(M)$. It is shown in \cite[Section 3.2]{MohsenBlowup} that any element $\xi\in \cT^*\cF_x$ must vanish on a subspace $V\subseteq \gr(\cF)_x$ such that $V\in \blup(\taD)_{(x,0,1)}$. By \cite[Proposition 2.7]{MohsenBlowup}, $V$ is a Lie subalgebra of $\gr(\cF)_x$. If $B_\xi:\mathfrak{g}\times\mathfrak{g}\to \R$ is the anti-symmetric bilinear map $B_\xi(X,Y)=\xi([X,Y])$. Then it is well known that $\dim(Ad^*(\Gr(\cF)_x)\cdot \xi)=\codim(\ker(B_\xi))$. But since $\xi$ vanishes on $V$ and $V$ is a Lie subalgebra, it follows that $B_\xi$ vanishes on $V\times V$. The result follows.
\end{proof}
%One of the main properties of the $C^*$-algebra of singular foliations is that a vector field in the foliation acts by unbounded 
\begin{refcontext}[sorting=nyt]
\printbibliography
\end{refcontext}
{\footnotesize
		 (Omar Mohsen) Paris-Saclay University, Paris, France
		\vskip-2pt e-mail: \texttt{omar.mohsen@universite-paris-saclay.fr}}
\end{document}